\def\N{\mathbb N}
\def\A{\mathcal A}
\def\B{\mathcal B}
\def\LL{\mathcal L}
\def\uu{\mathbf{u}}
\def\aa{\mathbf{a}}
\def\bb{\mathbf{b}}
\def\vv{\mathbf{v}}
\def\colour{\mathrm{colour}}
\theoremstyle{definition}
\newtheorem{definition}{Definition}
\newtheorem{corollary}[definition]{Corollary}
\newtheorem{remark}[definition]{Remark}
\newtheorem{example}[definition]{Example}
\theoremstyle{plain}
\newtheorem{theorem}[definition]{Theorem}
\newtheorem{proposition}[definition]{Proposition}
\newtheorem{lemma}[definition]{Lemma}
\begin{document}

\title[2-balanced sequences coding rectangle exchange transformation]{2-balanced sequences coding rectangle exchange transformation}


\author*[1]{\fnm{Lubom\'ira} \sur{Dvo\v r\'akov\'a}}\email{lubomira.dvorakova@fjfi.cvut.cz}
\equalcont{These authors contributed equally to this work.}

\author[1]{\fnm{Zuzana} \sur{Mas\'akov\'a}}\email{zuzana.masakova@fjfi.cvut.cz}
\equalcont{These authors contributed equally to this work.}

\author[1]{\fnm{Edita} \sur{Pelantov\'a}}\email{edita.pelantova@fjfi.cvut.cz}
\equalcont{These authors contributed equally to this work.}

\affil*[1]{\orgdiv{Department of Mathematics}, \orgname{Faculty of Nuclear Sciences and Physical Engineering, Czech Technical University in Prague}, \country{Czech Republic}}


\abstract{We define a new class of ternary sequences that are $2$-balanced.  These sequences are obtained  by colouring of Sturmian sequences.  We show that the class contains sequences of any given letter frequencies. We provide an upper bound on factor and abelian complexity of these sequences.  Using the interpretation by rectangle exchange transformation, we prove that for almost all triples of letter frequencies, the upper bound on factor and abelian complexity is reached. The bound on factor complexity is given using a number-theoretical function which we compute explicitly for a~class of parameters.
}

\keywords{Sturmian sequence, Frequency, Balancedness, Numeration system, Factor complexity, Abelian complexity}


\pacs[MSC Classification]{68R15}

\maketitle

\section{Introduction}

In combinatorics on words, the most studied sequences are {\em Sturmian sequences}. These are binary $1$-{\em balanced} sequences, introduced by Hedlund and Morse~\citep{MoHe40}.
A sequence is $C$-{\em balanced} for some integer $C\geq 1$ if, for any two factors and each letter, the number of occurrences of that letter in the two factors differs at most by $C$.
There is a handy description of $1$-balanced sequences over larger alphabets by Hubert~\citep{Hubert00}: each recurrent aperiodic 1-balanced sequence can be obtained from a~Sturmian sequence by the so called colouring by constant gap sequences. For $2$-balanced sequences, no such useful characterization is known. 
Sturmian sequences are also characterized by their factor complexity, which is equal to $n+1$ for each $n \in \mathbb N$. The class of Sturmian sequences contains sequences of any given letter frequencies.

Sturmian sequences may be generalized to larger alphabets in many ways. Let us observe the properties of such generalizations focusing on factor complexity, balancedness and letter frequencies. The best known generalization of Sturmian sequences are {\em Arnoux-Rauzy (AR) sequences}~\citep{DrJuPi2001}.
They share a lot of properties with Sturmian sequences. Their factor complexity is equal to $(d-1)n+1$ for each $n \in \mathbb N$, where $d$ is the alphabet size. However, there exist AR sequences that are not $C$-balanced for any $C$. Berth\'e, Cassaigne, and Steiner~\citep{BeCaSt2013} found a sufficient condition for $2$-balancedness of AR sequences. 
It is known that the letter frequencies of any Arnoux-Rauzy sequence belong to the Rauzy gasket~\citep{ArSt13}, a fractal set of Lebesgue measure zero.
Another well-known generalization is represented by hypercubic billiard sequences generated by momenta with rationally independent components. Ternary hypercubic billiard sequences are 2-balanced~\citep{AnVi2022} and under some additional condition on momentum, their factor complexity equals $n^2+n+1$ (see~\citep{AR1994, ARTokyo1994, Be2007}). The class contains sequences of any given rationally independent frequencies. 
Sturmian sequences may be also defined as codings of exchange of two intervals, so called 2iet sequences. Codings of transformations exchanging more than two intervals therefore naturally generalize Sturmian sequences, too.
It is known that for example 3iet sequences have almost always factor complexity $2n+1$ for all $n \in \mathbb N$ and in such a case, they are not $C$-balanced for any $C$~\citep{AMP2021}.  

A new class of ternary sequences associated with a multidimensional continued fraction
algorithm was introduced and studied by Cassaigne, Labb\'e, and Leroy~\citep{CaLaLe2022}. Almost every sequence in the new class is $C$-balanced for some $C$. Moreover, the class contains sequences of any given frequencies.
A generalization of Sturmian sequences associated with $N$-continued fraction algorithms was introduced by Langeveld, Rossi, and Thuswaldner~\citep{LaRoTh2022}. All these sequences are binary and $C$-balanced for some $C$. It is possible to characterize those ones among them that are $2$-balanced. The class contains sequences of any given rationally independent frequencies. 
All sequences mentioned above, with exception of hypercubic billiard sequences, have sublinear factor complexity.     

In this paper, which is an extended version of the conference paper~\citep{DMP2023}, we define a new class of ternary 2-balanced sequences. We obtain these sequences by colouring of Sturmian sequences by a Sturmian sequence and a constant sequence. Already in the conference paper, we showed that our class contains 2-balanced sequences of any given letter frequencies (recalled in Section~\ref{sec:balancedness}). 
We provided there also an upper bound on factor and abelian complexity of these sequences (recalled in Section~\ref{sec:complexity}).  The factor complexity is at most quadratic. In this paper, we extend our study by the following results. In Section~\ref{sec:rectangle_coding} we prove that these sequences may be also seen as codings of rectangle exchange transformation. Using rectangle exchange transformation, we show that for almost all triples of letter frequencies, the upper bound on factor and abelian  complexity is reached. 
The formula for factor complexity depends on the number  of factors sharing the same Parikh vector in the coloured Sturmian sequence. We describe this quantity by a number-theoretic function $P_\alpha$, see Definition~\ref{d:Palpha} and Lemma~\ref{funkceP}.
In Section~\ref{sec:numeration_systems}, for the class of Sturmian sequences with slope whose reciprocal is a quadratic non-simple Parry unit, we deduce a formula for $P_\alpha$ based on expansion of integers in a particular non-standard numeration system.

\section{Preliminaries}
An {\em alphabet} $\mathcal A$ is a finite set of symbols, called {\em letters}. A {\em word} $w$ over $\mathcal A$ is a finite string of letters from $\mathcal A$. Its {\em length} $|w|$ equals the number of letters it contains. To denote the number of occurrences of a letter $a$ in $w$, we use $|w|_a$. The {\em Parikh vector} of $w$, denoted $\Psi(w)$, is a vector with coordinates defined by $\Psi(w)_a=|w|_a$. 
The set of all finite words over $\A$ together with the operation of concatenation forms a monoid, denoted $\A^*$.
Its neutral element is the \textit{empty word} $\varepsilon$. 
A {\em sequence} $\uu=u_0 u_1 u_2\cdots$ over $\A$ is an infinite string of letters from $\A$, i.e., $u_i \in \A$ for each $i \in \mathbb N$. A word $w$ is a {\em factor} of $\uu=u_0 u_1 u_2\cdots$ if there exists $i \in \mathbb N$ such that $w=u_i u_{i+1}\cdots u_{i+|w|-1}$. The {\em language} $\mathcal{L}_\uu$ of $\uu$ is the set of all factors of $\uu$.
A sequence $\uu$ is called {\em eventually periodic} if it can be written in the form $\uu=xy^{\omega}$, where $x,y$ are words, $y$ is non-empty, and $y^{\omega}$ denotes an infinite repetition of $y$. The sequence $\uu$ is {\em aperiodic} if $\uu$ is not eventually periodic. 
The {\em factor complexity} is a mapping $\mathcal{C}_\uu: \mathbb N \to \mathbb N$ defined for each $n \in \mathbb N$ as
$$\mathcal{C}_\uu(n)=\#\{w \ : \ |w|=n, \ w \in \mathcal{L}_\uu\}\,.$$
For each aperiodic sequence $\uu$, the factor complexity satisfies $\mathcal{C}_\uu(n)\geq n+1$ for all $n \in \mathbb N$~\citep{MoHe40}.
The {\em abelian complexity} is a mapping $\mathcal{C}^{ab}_\uu: \mathbb N \to \mathbb N$ defined for each $n \in \mathbb N$ as
$$\mathcal{C}^{ab}_\uu(n)=\#\{\Psi(w) \ : \ |w|=n, \ w \in \mathcal{L}_\uu\}\,.$$
Fici and Puzynina have recently published a comprehensive survey on abelian combinatorics on words~\citep{FiPu23}.

A \textit{morphism} over $\A$ is a mapping $\psi: \A^* \to \A^*$ such that $\psi(uv) = \psi(u) \psi(v)$ for all $u, v \in \A^*$.
The morphism $\psi$ can be naturally extended to sequences by setting
$\psi(u_0 u_1 u_2 \cdots) = \psi(u_0) \psi(u_1) \psi(u_2) \cdots\,$.

A sequence $\uu$ over $\A$ is $C$-{\em balanced} for some integer $C\geq 1$ if, for any two factors $u,v$ of $\uu$ of the same length and each letter $a\in \A$, we have $\bigl||u|_a-|v|_a\bigr|\leq C$. Binary $1$-{\em balanced} (also called {\em balanced}) sequences were introduced by Hedlund and Morse~\citep{MoHe40}. Binary balanced aperiodic sequences are called {\em Sturmian}.
Sturmian sequences may be equivalently defined as aperiodic sequences having the least possible factor complexity, i.e., $\mathcal{C}_\uu(n) = n + 1$ for all $n \in \mathbb N$.
The {\em frequency} $f(a)$ of a letter  $a\in \A$ in a sequence $\uu = u_0u_1u_2\cdots$ is defined as the limit, if it exists,
$$f(a)=\lim_{n\to \infty}\frac{|u_0 u_1 \cdots u_{n-1}|_a}{n}\,.$$
 If $\uu$ is a $C$-balanced sequence, then the frequency  $f(a)$ of each letter $a$ occurring in  $\uu$ is well defined and moreover each factor $u$ of $\uu$ satisfies  $\Bigl||u|_a - f(a)|u|\Bigr|\leq C$~\citep{Ad03, BeDe14}.

The abelian complexity of $\uu$ is bounded if and only if $\uu$ is $C$-balanced for some positive integer $C$~\citep{RiSaZa2010}.

Sturmian sequences may be equivalently defined as codings of two interval exchange~\citep{Ra79}.
For a~given  irrational parameter $\alpha \in (0,1)$, we consider two intervals  $I_a=[0,1-\alpha)$ and $I_b=[1-\alpha, 1)$. 
The \emph{two interval exchange transformation} (2iet) $T:[0,1) \to [0,1)$ is defined by
\begin{equation}\label{transformation}
T(x) = x +\alpha \mod 1 \ \ =  \ \ 
\left\{
\begin{array}{ll}
x + \alpha & \quad \text{if} \ x \in I_a,\\
x + \alpha -1 & \quad \text{if}  \ x \in I_b.
\end{array}
\right.
\end{equation}
If we take an initial point $ \rho \in I_a\cup I_b$, the sequence $\uu = u_0u_1u_2 \dots \in \{a,b\}^\N$ defined by
\begin{equation}\label{eq:slovoobecnyintercept}
u_n =
\left\{
\begin{array}{ll}
 a & \quad \text{if} \ T^n(\rho) \in  I_a,\\
 b & \quad \text{if} \ T^n(\rho)  \in I_b,
\end{array}
\right.  
\end{equation}
i.e., a coding of the trajectory of the point $\rho$, is a \textit{2iet sequence} with the \textit{parameter} $\alpha$. The sequence $\bigl(T^n(\rho)\bigr)_{n\in \N}$  is  uniformly distributed, in particular, it is dense in $[0,1]$. It is well known that the language of $\uu$ depends only on $\alpha$, but does not depend on $\rho$, and the frequency of letter $b$ in  $\uu$ is $\alpha$. 
\footnote{The parameter $\alpha$ is usually called {\em slope} of the Sturmian sequence~\citep{Lo2002}.}

If  the parameter $\alpha \in (0,1)$ in \eqref{transformation} is a rational number, then  $\bigl(T^n(\rho)\bigr)_{n\in \N}$  is a periodic sequence, and consequently, $\uu = u_0u_1u_2 \dots \in \{a,b\}^\N$ is a periodic $1$-balanced sequence. 

\begin{remark}\label{parikh}
The abelian complexity of a Sturmian sequence $\uu$ satisfies $\mathcal{C}^{ab}_\uu(n) = 2$ for all $n \geq 1$~\citep{CoHe73}. In other words, only two vectors occur as Parikh vectors of  factors of length $n$ in $\uu$.
Since  the frequency of letter $b$ in a Sturmian sequence $\uu$  coding the transformation $T$ is an irrational number, say $\alpha$, the average number of occurrences of letter $b$ in a factor of length $n$ is $\alpha\, n$. This fact implies that the number of occurrences of letter $b$ in a factor of length $n$ must be smaller than $\alpha n$ for some factors and larger than $\alpha n$ for other factors.  Balancedness of $\uu$ then implies that the Parikh vector of a factor $u$ of lenght $n$ is 
$$ \Psi(u) = \left( \begin{smallmatrix} \lfloor (1-\alpha)n\rfloor \\ \lceil n\alpha\rceil \end{smallmatrix} \right) \quad \text{or}\quad \Psi(u) = \left( \begin{smallmatrix} \lceil (1-\alpha)n\rceil \\ \lfloor n\alpha\rfloor   \end{smallmatrix} \right). 
$$
\end{remark}

\section{Colouring of sequences and balancedness}
\label{sec:balancedness}

\begin{definition} Let $\uu$ be a~sequence over the alphabet $\{a,b\}$.
Let $\bf a, \bf b$ be two sequences over mutually disjoint alphabets $\mathcal{A}$ and $\mathcal{B}$.
The colouring of $\uu$ by $\bf a, \bf b$ is a sequence $\vv = \colour(\uu,\aa, \bb)$ over $\A\cup \B$ obtained from $\uu$ by replacing the subsequence of all $a$’s with $\aa$ and all $b$'s with $\bb$.
\end{definition}

For ${\bf v}  = \colour(\uu, {\bf a},{\bf b})$ we use the notation $\pi(\vv) = \uu$ and $\pi(v) = u$ for any $v \in \LL(\vv)$ and the corresponding $u \in \LL(\uu)$.
We say that $\uu$ (resp. $u$) is a \emph{projection} of $\vv$ (resp. $v$).
The map $\pi : \LL(\vv) \to \LL(\uu)$ is clearly a morphism.
Indeed $\pi(\varepsilon) = \varepsilon$ and for every $v, v' \in \LL(\vv)$ one has $\pi(v v') = \pi(v) \pi(v')$.
In the sequel, we will use also the morphism $\Pi:(\A\cup \B)^*\to {\B}^*$ such that $\Pi(x)=\varepsilon$ if $x\in \A$ and $\Pi(x)=x$ if $x\in \B$. Clearly, for each $v \in \LL(\vv)$, it holds $|v|_x=|\Pi(v)|_x$ for each $x\in \mathcal B$ and $|\Pi(v)|=|\pi(v)|_{b}$.
\begin{example}\label{ex:Fib} Let $\uu$ be the Fibonacci sequence over $\{ a,b\}$, i.e., $\uu=\varphi(\uu)$ for the morphism $\varphi$ given by $\varphi(a)=ab, \ \varphi(b)=a$. Furthermore, let ${\bf a}=1^{\omega}$ and ${\bf b}=(23)^{\omega}$, then
$$
\begin{array}{rcl}
\uu &=& a\ b\ a\ a\ b\ a\ b\ a\ a\ b\ a\ a\ b\ a\ b\ a\ a\ b\ a\ b\ a\ a\ b\ a\ a\ b\ a\ b\ a\ a\ b\ a\ a\ b\ a\ b\ \cdots \\

{\bf v}&=& 1\  2\ 1\ 1\ 3\ 1\ 2\ 1\ 1\ 3\ 1\ 1\ 2\ 1\ 3\ 1\ 1\ 2\ 1\ 3\ 1\ 1\ 2\ 1\ 1\ 3\ 1\ 2\ 1\ 1\ 3\ 1\ 1\ 2\ 1\ 3\ \cdots
\end{array}
$$
We have  ${\Pi}({1211312113112})=23232$ and ${\pi}({1211312113112})= abaababaabaab$.
\end{example}
In the sequel, similarly as in Example~\ref{ex:Fib}, we intend to colour 2iet sequences by one constant sequence and one 2iet sequence. Let us observe what happens with balancedness after colouring.

\begin{lemma}\label{lem:2balance} Let  $\uu$ be a 1-balanced sequence over $\{ a,b\}$, and 
${\bf a}=1^{\omega}$ and ${\bf b} = b_0b_1b_2\dots$  be a 1-balanced sequence over $\{2,3\}$. Then the ternary sequence ${\bf v} = \colour(\uu, {\bf a},{\bf b}) $ is $2$-balanced.  
\end{lemma}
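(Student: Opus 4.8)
The plan is to verify the defining inequality of $2$-balancedness one letter at a time, exploiting that the $\A$-letters and the $\B$-letters of any factor of $\vv$ project onto a factor of $\uu$ and a factor of $\bb$, respectively. First I would fix two factors $v,v'$ of $\vv$ with $|v|=|v'|$ and set $u=\pi(v)$, $u'=\pi(v')$. Since $\pi$ preserves length, $u$ and $u'$ are factors of $\uu$ of equal length. I record two facts to be used throughout: $|v|_1=|u|_a$, and $\Pi(v),\Pi(v')$ are factors of $\bb$ of lengths $|u|_b,|u'|_b$ with $|v|_2=|\Pi(v)|_2$ and $|v|_3=|\Pi(v)|_3$ (and analogously for $v'$). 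The statement that $\Pi(v)$ is a factor of $\bb$ is the point that makes the colouring tractable: the $b$'s inside a window of $\uu$ are consecutive occurrences of $b$, so the letters of $\bb$ substituted for them form a contiguous block.

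The letter $1$ requires no work beyond $1$-balancedness of $\uu$: it gives $\bigl||v|_1-|v'|_1\bigr|=\bigl||u|_a-|u'|_a\bigr|\le 1\le 2$.

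The heart of the argument, and the step I expect to be the main obstacle, concerns the letters $2$ and $3$, because $\Pi(v)$ and $\Pi(v')$ need not have equal length; $1$-balancedness of $\uu$ only guarantees $\bigl||u|_b-|u'|_b\bigr|\le 1$. When $|u|_b=|u'|_b$, the two words $\Pi(v),\Pi(v')$ are factors of $\bb$ of the same length, so $1$-balancedness of $\bb$ immediately yields $\bigl||v|_2-|v'|_2\bigr|\le 1$, and likewise for $3$. When the lengths differ by one, say $|u|_b=|u'|_b+1$, I would truncate the longer word $\Pi(v)$ by deleting one letter to obtain a factor $w$ of $\bb$ with $|w|=|\Pi(v')|$. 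Deleting one letter alters the number of $2$'s by at most one, so $\bigl||\Pi(v)|_2-|w|_2\bigr|\le 1$, while $1$-balancedness of $\bb$ gives $\bigl||w|_2-|\Pi(v')|_2\bigr|\le 1$; the triangle inequality then bounds $\bigl||v|_2-|v'|_2\bigr|=\bigl||\Pi(v)|_2-|\Pi(v')|_2\bigr|$ by $2$. The identical computation applies to the letter $3$.

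Collecting the three cases shows that every letter meets the required bound for an arbitrary pair of equal-length factors, which is precisely $2$-balancedness of $\vv$. Conceptually, the truncation trick absorbs exactly the one unit of slack inherited from $\uu$, which is why colouring a $1$-balanced sequence by $1$-balanced sequences produces the bound $2$ and not $1$.
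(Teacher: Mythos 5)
Your proposal is correct and follows essentially the same route as the paper: handle the letter $1$ via $1$-balancedness of $\uu$, observe that $\Pi(v),\Pi(v')$ are factors of $\bb$ whose lengths differ by at most one, and in the unequal-length case drop one end letter of the longer word and combine $1$-balancedness of $\bb$ with the triangle inequality to get the bound $2$. The paper's proof does exactly this truncation (removing the last letter $b_{i+\ell+1}$), so there is no substantive difference.
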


\begin{proof} 
Let $U,V$ be factors of $\bf v$ of the same length. We want to prove that for each letter $c \in  \{1,2,3\}$
$$\bigl||U|_c-|V|_c\bigr| \leq 2\,.$$
Denote $u=\pi(U)$ and $v=\pi(V)$. Clearly $|u|=|v|$. 
If $c=1$, then by definition of colouring, $|U|_1=|u|_a$ and $|V|_1=|v|_a$. By balancedness of $\uu$, we obtain $\bigl||U|_1-|V|_1\bigr|=\bigl||u|_a-|v|_a\bigr|  \leq 1$.
If $c \in \{2,3\}$, consider the morphism $\Pi:\{1,2,3\}^*\to {\{2,3\}}^*$ such that $\Pi(x)=\varepsilon$ if $x=1$ and $\Pi(x)=x$ if $x\in\{2,3\}$.
It holds that for each factor $W$ of $\bf v$ the word $\Pi(W)$ is a factor of $\bf b$.
By definition of $\Pi$, we have $|U|_c=|\Pi(U)|_c, \ |V|_c=|\Pi(V)|_c$ and by definition of colouring $|\Pi(U)|=|u|_{b}, \ |\Pi(V)|=|v|_{b}$.
By balancedness of $\uu$, the numbers $|u|_{b}$ and $|v|_{b}$ differ at most by one, hence the words $\Pi(U)$ and $\Pi(V)$ are factors of $\bf b$ whose lengths differ at most by one. 
\begin{itemize}
    \item Either $|\Pi(U)|=|\Pi(V)|$. Then since $\bf b$ is balanced, it follows $\bigl||\Pi(U)|_c-|\Pi(V)|_c\bigl|\leq 1$.
    \item Or (without loss of generality) $|\Pi(U)|=|\Pi(V)|+1$. Then $\Pi(U)=b_i\dots b_{i+\ell+1}$ and $\Pi(V)=b_j\dots b_{j+\ell}$ for some $i,j,\ell \in \mathbb N$. Then
    $$\bigl||\Pi(U)|_c-|\Pi(V)|_c\bigr|\leq \bigl||b_i\dots b_{i+\ell}|_c-|b_j\dots b_{j+\ell}|_c\bigr|+|b_{i+\ell+1}|_c\leq 2\,.$$
    
    \end{itemize}
   Since  $|U|_c=|\Pi(U)|_c$ and $|V|_c=|\Pi(V)|_c$, we have proved $\bigl||U|_c-|V|_c\bigr| \leq 2$.
\end{proof}

Let us show that by colouring a 2iet sequence by one constant sequence and one 2iet sequence, we are able to construct $2$-balanced sequences with prescribed frequencies.

\begin{theorem}\label{theorem} Let $f(1), f(2), f(3)$ be positive numbers such that $f(1)+f(2)+f(3)=1$. Then there exist binary $1$-balanced sequences $\uu, \bb$ and a constant sequence ${\bf a}$ such that the ternary sequence $\vv = \colour(\uu, {\bf a}, {\bf b})$  satisfies  
\begin{enumerate}
    \item the frequency of letter $i$ in ${\bf v}$ is $f(i)$ for each $i\in \{1,2,3\}$;
    \item ${\bf v}$ is $2$-balanced.
\end{enumerate}
\end{theorem}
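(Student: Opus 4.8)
The plan is to reduce the statement to Lemma~\ref{lem:2balance} by choosing the letter densities of $\uu$ and $\bb$ so that the frequencies they induce in $\vv$ match the prescribed $f(1),f(2),f(3)$. First I would set $\beta = f(2)+f(3) = 1-f(1)$ and $\gamma = \frac{f(2)}{f(2)+f(3)}$; since all three $f(i)$ are positive and sum to $1$, both $\beta$ and $\gamma$ lie in $(0,1)$. I would then take $\uu$ to be a binary $1$-balanced sequence over $\{a,b\}$ in which the letter $b$ has frequency $\beta$, and $\bb$ a binary $1$-balanced sequence over $\{2,3\}$ in which the letter $2$ has frequency $\gamma$. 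Such sequences exist for every target density in $(0,1)$: realise them as codings of the transformation \eqref{transformation}--\eqref{eq:slovoobecnyintercept} with parameter $\beta$ (resp.\ $\gamma$), which are Sturmian when the parameter is irrational and periodic $1$-balanced otherwise, and whose frequency of the letter $b$ equals the parameter. Finally set $\aa = 1^{\omega}$ and $\vv = \colour(\uu,\aa,\bb)$.

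The second step is to compute the three frequencies in $\vv$. Since colouring preserves positions, the letter $1$ occupies exactly the positions of $a$ in $\uu$, so the frequency of $1$ in $\vv$ equals the frequency of $a$ in $\uu$, namely $1-\beta = f(1)$. For the letters $2$ and $3$ I would use the morphism $\Pi$ from Section~\ref{sec:balancedness}: writing $m_n = |u_0\cdots u_{n-1}|_b$ for the number of $b$'s in the length-$n$ prefix of $\uu$, one has $\Pi(v_0\cdots v_{n-1}) = b_0\cdots b_{m_n-1}$, so the number of $2$'s in the length-$n$ prefix of $\vv$ equals $|b_0\cdots b_{m_n-1}|_2$. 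Expressing the frequency of $2$ in $\vv$ as the product $\frac{|b_0\cdots b_{m_n-1}|_2}{m_n}\cdot\frac{m_n}{n}$ and letting $n\to\infty$ (so that $m_n\to\infty$ because $\beta>0$, and $\frac{m_n}{n}\to\beta$) yields frequency $\gamma\beta = f(2)$, and symmetrically the frequency of $3$ is $(1-\gamma)\beta = f(3)$. This settles item~(1).

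Item~(2) is then immediate: $\uu$ is $1$-balanced over $\{a,b\}$, the colour $\aa=1^\omega$ is constant, and $\bb$ is $1$-balanced over $\{2,3\}$, so $\vv=\colour(\uu,\aa,\bb)$ is $2$-balanced directly by Lemma~\ref{lem:2balance}.

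The arithmetic of inverting the linear relation between $(\beta,\gamma)$ and $(f(1),f(2),f(3))$ is routine, and the balancedness half is handed to us by Lemma~\ref{lem:2balance}. The only point demanding genuine care is the frequency computation for $2$ and $3$, where one must justify passing to the limit through the nested counting $n\mapsto m_n$; this is where I expect the main (modest) obstacle to lie, and it is resolved by observing that $m_n\to\infty$ together with $\frac{m_n}{n}\to\beta$, both guaranteed because $\uu$ is $1$-balanced with $\beta>0$ and the frequency of $2$ in the $1$-balanced sequence $\bb$ exists.
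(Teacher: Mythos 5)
Your proposal is correct and follows essentially the same route as the paper: the same choice of parameters (your $\beta,\gamma$ are exactly the paper's $\alpha=1-f(1)$ and $f(2)/\alpha$), realisation of $\uu$ and $\bb$ as 2iet codings, and the appeal to Lemma~\ref{lem:2balance} for $2$-balancedness. The only difference is that you spell out, via the counting $n\mapsto m_n$ and the product of limits, the frequency computation that the paper dismisses as ``a simple fact,'' which is a welcome but not structurally different addition.
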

\begin{proof} 
\noindent We will make use of a simple fact that letter frequencies in $\vv = \colour(\uu, {\bf a}, {\bf b})$ can be easily computed from letter frequencies in $\uu$ and ${\bf b}$. 

We set $\vv=\colour(\uu, \aa, \bb)$, where $\aa=1^{\omega}$, $\uu$ is a 2iet sequence over $\{a,b\}$ with frequency of $a$ being $f(1)$ and frequency of $b$ being $\alpha=1-f(1)$ and $\bb$ is a 2iet  sequence over $\{2,3\}$, where the frequency of $2$ equals $\frac{f(2)}{\alpha}$ and the frequency of $3$ equals $\frac{f(3)}{\alpha}$.

\medskip 
$2$-balancedness of $\vv$ is a consequence of Lemma~\ref{lem:2balance}.
\end{proof}

\section{On factor and abelian complexity}\label{sec:complexity}

Using Theorem \ref{theorem} we can construct for every frequency vector $\vec{f} \in \mathbb{R}^{3}$ a 2-balanced  sequence  $\vv = \colour(\uu, {\bf a}, {\bf b})$ having the required frequencies of letters. If the frequency vector is rational, then our construction gives a periodic infinite sequence, in particular,  a sequence with bounded factor complexity.  

In this section we  find an upper bound on factor complexity and on abelian complexity   of $\vv $ in the case when the binary sequences we used in the construction are  Sturmian sequences.   For this purpose, we need to determine how  many factors of length $n$ in a Sturmian sequence have the same Parikh vector. The form of the Parikh vectors is described in Remark~\ref{parikh}.

\begin{definition} \label{d:Palpha}
Let $\alpha \in (0,1)$, $\alpha$ irrational, and let $\uu$ be a Sturmian sequence over $\{a,b\}$ with frequency of letter $b$ equal to  $\alpha$.  For each $n \in \N, n\geq 1$,  we denote   
$$
P_\alpha(n) = \# \left\{ u \in \mathcal{L}(\uu):  \Psi(u) = \left( \begin{smallmatrix} \lfloor (1-\alpha)n\rfloor \\ \lceil n\alpha\rceil \end{smallmatrix} \right)\right\}. 
$$
\end{definition}

\noindent Since the factor complexity of a Sturmian sequence is  $\mathcal{C}_\uu(n)=n+1$ for every $n \in \N$, obviously  
$$
n+1 - P_\alpha(n) =\# \left\{ u \in \mathcal{L}(\uu): \Psi(u) = \left( \begin{smallmatrix} \lceil (1-\alpha)n\rceil \\ \lfloor n\alpha\rfloor   \end{smallmatrix} \right)\right\}. 
$$
The following formula for computation of $P_\alpha$ was derived already by Rigo et al.~\citep{RiSaVa2013} and Fici et al.~\citep{Fici2016}.
For reader's convenience, we provide here a simple proof based on 2 interval exchange transformation. 
\begin{lemma}\label{funkceP} 
Let $\alpha \in (0,1)$, $\alpha$ irrational.   Then for each  $n \in \N, n\geq 1$, 
$$
P_\alpha(n) = \# \Bigl\{k\in \{1,\ldots,n\}: \ k\alpha - \lfloor k\alpha \rfloor\  \leq \  n\alpha - \lfloor n\alpha \rfloor  \Bigr\}\,.  
$$
\end{lemma}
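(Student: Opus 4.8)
The plan is to use the two interval exchange transformation realization of the Sturmian sequence $\uu$ to convert the combinatorial quantity $P_\alpha(n)$ into a counting problem about the orbit $\bigl(T^k(\rho)\bigr)_{k}$ inside $[0,1)$. Recall from Remark~\ref{parikh} that every factor of length $n$ has one of exactly two Parikh vectors, and the one counted by $P_\alpha(n)$ is the ``$b$-heavy'' vector $\left(\begin{smallmatrix}\lfloor(1-\alpha)n\rfloor\\ \lceil n\alpha\rceil\end{smallmatrix}\right)$. Since the language of $\uu$ does not depend on the intercept $\rho$, I am free to fix a convenient $\rho$ and identify the factors of length $n$ with the $n+1$ points $T^0(\rho),T^1(\rho),\ldots,T^n(\rho)$ cutting $[0,1)$, or equivalently with the windows of length $n$ in the coding. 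The heart of the matter is to read off, from the position $T^k(\rho)$, whether the length-$n$ factor starting at index $k$ carries the heavy or the light Parikh vector.

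First I would make the link between the number of $b$'s in a prefix and the fractional part $\{k\alpha\}$. Taking the standard intercept (for instance $\rho$ chosen so that $u_k=b$ iff $\{(k+1)\alpha\}<\alpha$, i.e. the characteristic Sturmian setup), one has the exact count $|u_0\cdots u_{k-1}|_b=\lfloor k\alpha\rfloor$ up to the usual floor/ceiling correction coming from $\rho$. Consequently, for a window of length $n$ starting at position $k$, the number of $b$'s equals $\lfloor(k+n)\alpha\rfloor-\lfloor k\alpha\rfloor$, which is either $\lfloor n\alpha\rfloor$ or $\lceil n\alpha\rceil=\lfloor n\alpha\rfloor+1$. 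The window is $b$-heavy (hence counted by $P_\alpha(n)$) exactly when this difference equals $\lfloor n\alpha\rfloor+1$. The standard identity $\lfloor(k+n)\alpha\rfloor-\lfloor k\alpha\rfloor=\lfloor n\alpha\rfloor+1$ holds precisely when $\{k\alpha\}+\{n\alpha\}\geq 1$, i.e. when $\{k\alpha\}\geq 1-\{n\alpha\}$, where I write $\{x\}=x-\lfloor x\rfloor$. Rewriting this threshold condition and matching it against the claimed inequality $k\alpha-\lfloor k\alpha\rfloor\leq n\alpha-\lfloor n\alpha\rfloor$ is the main computation; it amounts to checking that the set of admissible residues $\{k\alpha\}$ producing a heavy factor is exactly the interval whose endpoints are dictated by $\{n\alpha\}$, and that as $k$ ranges over a suitable block of $n$ consecutive integers these residues are distinct and in bijection with the distinct length-$n$ factors.

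The key structural fact I would invoke is that the $n+1$ points $\{j\alpha\}$ for $j$ in an appropriate index range are pairwise distinct (by irrationality of $\alpha$) and that distinct factors of length $n$ correspond bijectively to distinct such points; this is the standard three-distance/two-interval picture in which the $n$ points $\{T^j(\rho)\}$ partition the circle and the length-$n$ factor is determined by which atom of the partition the point lies in. Under this bijection, counting $b$-heavy factors becomes counting indices $k\in\{1,\ldots,n\}$ with $\{k\alpha\}$ in the relevant subinterval, which is exactly the right-hand side of the Lemma after the threshold condition is rearranged. The remaining bookkeeping is to confirm that the index set can be taken to be $\{1,\ldots,n\}$ (rather than, say, $\{0,\ldots,n-1\}$) and that the endpoint $k=n$, for which $\{n\alpha\}\leq\{n\alpha\}$ holds trivially, is correctly included — this boundary/indexing alignment is the only place where an off-by-one error could creep in.

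The main obstacle I anticipate is precisely this alignment between three objects that all have roughly $n$ elements: the $n+1$ length-$n$ factors, the partition points of the interval, and the residues $\{k\alpha\}$ for $k\in\{1,\ldots,n\}$. The substantive inequality manipulation — showing that $\{k\alpha\}+\{n\alpha\}\geq 1$ is equivalent to $\{k\alpha\}\le\{n\alpha\}$ after replacing $k$ by $n-k$ or by using the complement, and thereby producing the clean form stated — must be done carefully so that the two Parikh classes partition the $n+1$ factors correctly and the count comes out as $P_\alpha(n)$ rather than its complement $n+1-P_\alpha(n)$. I expect the cleanest route is to use the transformation $T$ directly: the heavy factor starting at $T^k(\rho)$ is detected by whether the orbit point lands in a specific subinterval of length $\{n\alpha\}$, and then the equidistributed orbit points $T^k(\rho)$ for $k\in\{1,\ldots,n\}$ can be replaced by the residues $\{k\alpha\}$ via the explicit form of $T$, yielding the stated counting formula once the interval endpoints are pinned down.
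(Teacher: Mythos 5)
Your pointwise criterion is correct: with intercept $0$, the window of length $n$ starting at position $k$ contains $\lfloor(k+n)\alpha\rfloor-\lfloor k\alpha\rfloor$ letters $b$, and this equals $\lceil n\alpha\rceil$ exactly when $\{k\alpha\}\geq 1-\{n\alpha\}$. The genuine gap is in the counting step: you propose to count \emph{distinct heavy factors} by counting \emph{starting positions} $k$ in a block of $n$ consecutive integers, invoking as a "key structural fact" that these residues are in bijection with the distinct factors of length $n$. That fact is false: the map from starting positions to factors is in general neither injective nor surjective on a block of $n$ (or even $n+1$) consecutive positions. Take $\alpha=(3-\sqrt5)/2=1/\tau^2$ (the Fibonacci case) and $n=3$: the coding with intercept $0$ begins $aabaababaab\cdots$, so the windows at positions $0,1,2,3$ read $aab,aba,baa,aab$ --- a repetition, and the fourth factor $bab$ (the unique heavy one) first occurs only at position $5$. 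Accordingly, your proposed count $\#\{k\in\{1,\dots,n\}:\{k\alpha\}\geq 1-\{n\alpha\}\}$ equals $0$ here, whereas $P_\alpha(3)=1$. In particular, the change of variable $k\mapsto n-k$ that you hope will convert $\{k\alpha\}\geq 1-\{n\alpha\}$ into $\{k\alpha\}\leq\{n\alpha\}$ cannot exist: the two index sets simply have different cardinalities. Equidistribution of the orbit, which you also invoke, only yields asymptotic proportions and can never produce an exact count valid for every fixed $n$.

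The repair --- and this is what the paper does --- is to parametrize factors not by forward orbit points but by the atoms of the partition of $[0,1)$ cut out by the \emph{backward} orbit of the discontinuity, i.e.\ by the $n$ points $T^{-j}(1-\alpha)=1-\{(j+1)\alpha\}$, $j=0,\dots,n-1$; these atoms are in genuine bijection with the $n+1$ factors of length $n$. Crossing one of these partition points changes the coding by swapping $ab\leftrightarrow ba$ at positions $j,j+1$, which preserves the Parikh vector, except at the point $T^{-n+1}(1-\alpha)$, where only the last letter flips from $a$ to $b$. Hence the heavy factors are precisely the atoms lying to the right of $T^{-n+1}(1-\alpha)=1-\{n\alpha\}$, and counting them through their left endpoints $T^{-k+1}(1-\alpha)=1-\{k\alpha\}$, $k=1,\dots,n$, turns the condition $1-\{k\alpha\}\geq 1-\{n\alpha\}$ into the stated $\{k\alpha\}\leq\{n\alpha\}$. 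Note that the clean ``$\leq$'' in the lemma arises from the reflection $k\alpha\mapsto-k\alpha$ inherent in using preimages of the discontinuity; no manipulation among forward positions reproduces it.
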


\begin{proof} Let $T$ denote the transformation defined in \eqref{transformation} and let $\uu$ be a coding of $T$.  
A~word $x_0x_1 \cdots x_{n-1}$ is a factor of $\uu$ if and only if there exists $x \in [0,1)$ such that

\medskip
\centerline{$x\in I_{x_0}, \  T(x) \in I_{x_1}, \ \ldots, \  T^{n-1}(x) \in I_{x_{n-1}}$.}

\medskip

Consider a partition of $[0,1)$ into $n+1$ intervals closed from the left and open from the right, the partition is  given by the $n$ points $T^{-k}(1-
\alpha)$,  for $k= 0,1,\ldots, n-1$. Let  $x, y \in [0,1)$. 
Then  obviously, the symbolic coding of the $n$-tuples  \begin{equation}\label{nTuples}x,T(x), T^2(x),  \ldots, T^{n-1}(x)\quad \text{and}\quad y,T(y), T^2(y),  \ldots, T^{n-1}(y)\end{equation} coincide if and only if $x$ and $y$ belong to the same interval of the partition.

Let $[e,f)$  and $[f,g)$ be two neighbouring intervals of the partition, where $f = T^{-j}(1-\alpha)$ for some $j\leq n-1$. Let the words $x_0x_1\cdots x_{n-1}$  and $y_0y_1\cdots y_{n-1}$ code the $n$-tuples listed in \eqref{nTuples} for  $x \in [e,f)$  and $y\in [f,g)$.

If $j < n-1$, then $x_i = y_i$ for every $i\neq j$ and $i \neq j+1$. Moreover, $x_jx_{j+1} = ab$  and  $y_jy_{j+1} = ba$. 
Hence the Parikh vectors of $x_0x_1\cdots x_{n-1}$  and $y_0y_1\cdots y_{n-1}$ coincide. 

If $ j= n-1$, then $x_{n-1} = a$,  $y_{n-1} = b$, and  $x_i = y_i$ for every $i< n-1$. Hence the number of letters $a$ in  $x_0x_1\cdots x_{n-1}$ is by one greater than the number of letters $a$ in $y_0y_1\cdots y_{n-1}$. 

Thus the factors of length $n$ have only two distinct Parikh vectors. Those intervals of the partition that are situated to the left from the point $ T^{-n+1}(1-\alpha)$ correspond to factors with a smaller number of $b$'s, those ones situated to the right from $ T^{-n+1}(1-\alpha)$ correspond to factors  with a greater number of $b$'s.
Hence
$$P_\alpha(n) = \bigl\{k\in \{1, \ldots, n\}: T^{-k+1}(1-\alpha) \geq  T^{-n+1}(1-\alpha)\bigr\}\,.  $$

The inverse transformation to $T$ is $T^{-1}(x) = x-\alpha \mod 1$. Hence $ T^{-k+1}(1-\alpha) = 1-k\alpha  \mod 1 = 1-k\alpha  - \lfloor 1- k\alpha\rfloor = 1- k\alpha +\lfloor k\alpha \rfloor $. Consequently, $T^{-k+1}(1-\alpha) \geq  T^{-n+1}(1-\alpha)$ if and only if $ k\alpha -\lfloor k\alpha \rfloor \leq n\alpha -\lfloor n\alpha \rfloor$. 
\end{proof}

\begin{example}\label{Pocet} 
Using Lemma \ref{funkceP} we computed  the values of $P_\alpha(n)$  for  $\alpha= 2-\sqrt{3}$ 
and  $n \in \N, n\leq 10$. They are provided in Table \ref{Table0}. 

\begin{table}[ht]
    \centering
        \setlength{\tabcolsep}{3pt}
\renewcommand{\arraystretch}{1.2}

\begin{tabular}{|c|c|c|c|c|c|c|c|c|c|c|}
    \hline
     $n$ & 1 & 2 & 3 & 4 & 5 & 6 & 7 & 8 & 9 & 10 \\ 
    \hline
    $n\alpha-\lfloor n\alpha \rfloor$ & 0.2679 & 0.5359 & 0.8038 & 0.0718 & 0.3397 & 0.6077 & 0.8756 & 0.1436 & 0.4115 & 0.6795 \\ 
    \hline
    $P_\alpha(n)$ & 1 & 2 & 3 & 1 & 3 & 5 & 7 & 2 & 5 & 8 \\ 
    \hline
\end{tabular}
\caption{$\alpha=2-\sqrt{3}  \sim 0,2679 $}\label{Table0}
\end{table}
\end{example}

\subsection{Ternary 2-balanced sequences}
\begin{theorem}\label{ternary}  
Let $\uu$ be a  Sturmian sequence over $\{a,b\}$ with frequency of letter $b$ equal to an irrational number  $\alpha \in (0,1)$. Let  $\aa=1^\omega$ and $\bb$  be a Sturmian sequence over the alphabet $\{2,3\}$. Then the sequence ${\bf v}  = \colour(\uu, {\bf a},{\bf b})$ has the following properties:
\begin{itemize}
\item $\vv$ is $2$-balanced; moreover, $\vv$ is not 1-balanced if $\alpha\not \in \{\frac{1}{1+\gamma}, \frac{1}{2-\gamma}\}$, where $\gamma \in (0,1)$ is the frequency of letter $3$ in the sequence $\bb$; 
\item the factor and abelian complexity satisfy  
\begin{equation}\label{upperbound}\mathcal{C}_\vv (n) \leq P_\alpha(n) +  (n+1) \lceil n\alpha \rceil\qquad \text{and} \qquad \mathcal{C}^{ab}_\vv (n) \leq 4, 
\end{equation}
 for each $n \in \N, n\geq 1$.  
 \end{itemize}
\end{theorem}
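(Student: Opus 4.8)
The plan is to settle the three claims in turn, leaving the failure of $1$-balancedness for last, as it carries essentially all the difficulty.

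\textbf{Two-balancedness and the encoding of factors.} Since $\uu$ is Sturmian it is $1$-balanced, and $\bb$ is a $1$-balanced sequence over $\{2,3\}$; thus Lemma~\ref{lem:2balance} applies directly and yields that $\vv$ is $2$-balanced. For the two complexity bounds the common tool is the map $V\mapsto(\pi(V),\Pi(V))$ sending a factor $V$ of $\vv$ to its projection $u=\pi(V)\in\LL(\uu)$ together with its colour window $W=\Pi(V)\in\LL(\bb)$; here $|\pi(V)|=|V|$ and $|W|=|u|_b$. This map is injective: the positions of $a$ in $u$ mark the letters $1$ of $V$ and the positions of $b$ in $u$ mark the slots to be filled, in order, by the letters of $W$, so the pair $(u,W)$ reconstructs $V$.

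\textbf{The two complexity bounds.} For the abelian bound, the Parikh vector of a factor $V$ of length $n$ is determined by the pair $(|V|_1,|V|_3)$, since $|V|_2=n-|V|_1-|V|_3$. By Remark~\ref{parikh}, $|V|_1=|u|_a$ takes at most the two values $\lfloor(1-\alpha)n\rfloor$ and $\lceil(1-\alpha)n\rceil$; once it is fixed, the length $|u|_b=n-|V|_1$ of the colour window is fixed, and since $\bb$ is Sturmian, $|V|_3=|W|_3$ takes at most two values. Hence at most $2\cdot2=4$ Parikh vectors occur, giving $\mathcal{C}^{ab}_\vv(n)\le4$. For the factor bound, injectivity gives $\mathcal{C}_\vv(n)\le\sum_u \#\{W\in\LL(\bb):|W|=|u|_b\}=\sum_u\mathcal{C}_\bb(|u|_b)$, the sum running over the $n+1$ factors $u$ of $\uu$ of length $n$. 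As $\bb$ is Sturmian, $\mathcal{C}_\bb(m)=m+1$; by Definition~\ref{d:Palpha} and Remark~\ref{parikh}, exactly $P_\alpha(n)$ of these factors satisfy $|u|_b=\lceil n\alpha\rceil$ and the remaining $n+1-P_\alpha(n)$ satisfy $|u|_b=\lfloor n\alpha\rfloor=\lceil n\alpha\rceil-1$. Therefore $\mathcal{C}_\vv(n)\le P_\alpha(n)(\lceil n\alpha\rceil+1)+(n+1-P_\alpha(n))\lceil n\alpha\rceil=P_\alpha(n)+(n+1)\lceil n\alpha\rceil$, as claimed.

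\textbf{Failure of $1$-balancedness.} I would argue geometrically, reading windows of $\vv$ along the orbit of the skew product on $[0,1)^2$ that rotates the first coordinate by $\alpha$ and rotates the second coordinate by $\gamma$ exactly at the steps where the first coordinate lies in $I_b$; the first coordinate governs $\uu$ and the second the colouring by $\bb$. For a window of length $n$ issued from a state $(x,y)$, the count $|V|_1$ is the number of visits of the $x$-orbit to $I_a$, while $|V|_3$ is the number of visits of the $y$-orbit to $[1-\gamma,1)$ over the $|u|_b$ steps during which $y$ advances. Writing $f=\lfloor n\alpha\rfloor$ and comparing the two admissible colour Parikh vectors of lengths $f$ and $f+1$ shows that the candidate Parikh vectors of $\vv$ always contain a pair differing by $2$: in letter $3$ when $f\gamma-\lfloor f\gamma\rfloor>1-\gamma$, and in letter $2$ otherwise (the borderline is excluded since $\gamma$ is irrational). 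Thus failure of $1$-balancedness is equivalent to \emph{realizability}: both extremal windows --- a $b$-rich projection carrying the maximal number of colour-$3$'s, and a $b$-poor projection carrying the minimal number --- must actually occur in $\vv$. I would obtain this from minimality of the rectangle exchange transformation of Section~\ref{sec:rectangle_coding}, which forces the skew-product orbit to meet the two rectangles cutting out these extremal windows, and then check that these rectangles are non-degenerate precisely when $\alpha\notin\{\frac1{1+\gamma},\frac1{2-\gamma}\}$ --- equivalently, when neither coincidence $f(1)=f(3)$ nor $f(1)=f(2)$ of letter frequencies holds.

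\textbf{Main obstacle.} Everything up to the list of four candidate Parikh vectors is bookkeeping resting on Remark~\ref{parikh} and $\mathcal{C}_\bb(m)=m+1$. The genuine content is the realizability step: one must show that the two extremal factors co-occur in the \emph{same} sequence $\vv$, which requires controlling the joint distribution of the two rotations along the skew-product orbit and pinning down exactly the two slopes at which the needed rectangle collapses to measure zero, so that there $\vv$ is in fact $1$-balanced.
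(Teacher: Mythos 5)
Your treatment of $2$-balancedness and of both complexity bounds is correct and essentially identical to the paper's own proof: $2$-balancedness is quoted from Lemma~\ref{lem:2balance}, and the counting argument (each factor $u$ of $\uu$ of length $n$ admits at most $|u|_b+1$ colourings by factors of $\bb$, with $P_\alpha(n)$ factors having $|u|_b=\lceil n\alpha\rceil$ and $n+1-P_\alpha(n)$ having $|u|_b=\lfloor n\alpha\rfloor$, plus the $2\times 2$ count of candidate Parikh vectors) is exactly the paper's computation.

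The non-$1$-balancedness part, however, contains a genuine gap. The paper proves it by a purely frequency-based argument: by Hubert's theorem, any recurrent aperiodic ternary $1$-balanced sequence is a colouring of a Sturmian sequence by the constant gap sequences $(\hat1\hat2)^\omega$ and $\hat3^\omega$, hence has two equal letter frequencies; since the frequency vector of $\vv$ is $\bigl(1-\alpha,\ \alpha(1-\gamma),\ \alpha\gamma\bigr)$, two coordinates can coincide only when $\alpha\in\{\frac{1}{1+\gamma},\frac{1}{2-\gamma}\}$ (irrationality of $\gamma$ excludes the third coincidence), and the contrapositive gives the claim. Your route instead requires the two extremal windows to be \emph{realized} as factors of the same sequence $\vv$, and for this you invoke ``minimality of the rectangle exchange transformation of Section~\ref{sec:rectangle_coding}.'' But the density statement available there (Proposition~\ref{p:huste}, via Kronecker's theorem) holds only under the hypothesis that $1$, $\alpha$ and $\alpha\gamma$ are rationally independent --- a hypothesis that Theorem~\ref{ternary} does not make. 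The theorem covers rationally dependent parameters: for instance $\alpha=1/\tau^2$, $\gamma=1/\tau$ satisfy $2\alpha+\alpha\gamma=1$, yet $\alpha\notin\{\frac{1}{1+\gamma},\frac{1}{2-\gamma}\}$, so the theorem asserts non-$1$-balancedness while the rectangle-exchange orbit is not dense and your realizability step has no support. This is precisely the regime where the dynamical argument is reserved for the stronger Theorem~\ref{thm:ternary}, which does assume rational independence.

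A second, related error is your closing claim that at the two excluded slopes ``the needed rectangle collapses to measure zero, so that there $\vv$ is in fact $1$-balanced.'' The theorem makes no such converse assertion, and it is false: for $\alpha=1/\tau=\frac{1}{2-\gamma}$ with $\gamma=1/\tau^2$ (the parameters of Table~\ref{table3}), the abelian complexity of $\vv$ takes the value $4$, whereas an aperiodic ternary $1$-balanced sequence has abelian complexity at most $3$ (indeed constantly $3$, by Richomme, Saari and Zamboni); so $\vv$ is not $1$-balanced there either. The excluded values $\frac{1}{1+\gamma}$ and $\frac{1}{2-\gamma}$ mark only where the frequency argument becomes inconclusive, not where $1$-balancedness actually holds, and any repaired proof should be organized around the frequency obstruction rather than around degeneracy of rectangles.
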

\begin{proof}
The sequence $\vv$ is $2$-balanced by Lemma~\ref{lem:2balance}. 
On one hand, using a theorem by Hubert~\citep{Hubert00}, each ternary $1$-balanced aperiodic recurrent sequence over $\{\hat 1, \hat 2, \hat 3\}$ is a colouring of a Sturmian sequence by sequences $(\hat 1\hat 2)^{\omega}$ and ${\hat 3}^{\omega}$. The frequency vector of the colouring has then evidently two equal coordinates. On the other hand, the frequency vector of the sequence $\vv$ is equal to $(1-\alpha,\alpha(1-\gamma), \alpha\gamma)$. The last two coordinates are not equal by irrationality of $\gamma$. The reader easily verifies that if there are two equal coordinates, then $\alpha=\frac{1}{1+\gamma}$ or $\alpha=\frac{1}{2-\gamma}$. Consequently, $\vv$ is not $1$-balanced for $\alpha$ distinct from those two values.

Fix a factor $w\in \mathcal{L}(\uu)$ of length $n$.  We colour all letters $b$ in $w$ by a  factor of $\bb$ of length $|w|_b$. As  $\bb$ is Sturmian, we can use $|w|_b+1$ factors of $\bb$ for colouring $w$. Thus,  colouring of the fixed factor $w$ gives us at most $|w|_b  +1$ distinct factors of $\vv$. Since in $\uu$ we have $P_\alpha(n)$ factors with  $|w|_b =  \lceil n \alpha \rceil$  and $n+1-P_\alpha(n)$ factors with $|w|_b =  \lfloor n \alpha \rfloor$, the number of factors of the coloured sequence $\vv$ is at most 
$$\bigl(1+ \lceil n \alpha \rceil\bigr) P_\alpha(n) + \bigl(1+ \lfloor n \alpha \rfloor\bigr) \bigl(n+1-P_\alpha(n)\bigr) =  P_\alpha(n)\bigl( \lceil n \alpha \rceil -\lfloor n \alpha \rfloor \bigr) + (n+1)(1+ \lfloor n \alpha \rfloor)\,.
$$

There are two distinct Parikh vectors for factors of length $n$ in $\uu$. After colouring, each of them gives rise to at most two distinct Parikh vectors in $\vv$. 
Consequently, $\mathcal{C}^{ab}_\vv (n) \leq 4$.
\end{proof}

\begin{remark}\label{rem:asymptotic behavior}
Since $P_{\alpha}(n)\leq n $ for each $n \in \N$, it follows from Theorem~\ref{ternary} that $$\mathcal{C}_\vv (n) \leq  \alpha n^2\bigl(1+ o(1)\bigr)\,.$$
\end{remark}

\begin{remark} Notice that the fractions $\frac{1}{1+\gamma}$ and $\frac{1}{2-\gamma}$ in the first part of Theorem~\ref{ternary} are larger than $\frac{1}{2}$. Recall that  $\alpha$ is the frequency of letter $b$ in $\uu$ and $\gamma$ is the frequency of  letter $3$ in $\bb$. Consequently, as soon as $\alpha < \frac12$, the sequence $\vv$ is not 1-balanced. Let us consider the case $\alpha$ equals $\frac{1}{1+\gamma}$ or $\frac{1}{2-\gamma}$. If $\bb$ and $\uu$ are standard  Sturmian sequences,  then the relation between the continued fraction of $\alpha$ and $\gamma$ implies that $\uu$ over $\{a,b\}$ is a morphic image of  $\bb$ over $\{2,3\}$  under the morphism $2 \mapsto ba, \ 3 \mapsto b$, or under the morphism $2 \mapsto b, \ 3 \mapsto ba$. 
\end{remark}

\begin{example} \label{ex:ruzna_alpha_gamma}
We compare the factor and abelian complexity with  the upper bounds from Theorem~\ref{ternary} for ${\bf v}  = \colour(\uu, {\bf a},{\bf b})$, where $\uu$ is a Sturmian sequence over $\{a,b\}$ and $\alpha$ is the frequency of $b$, $\bb$ is a~Sturmian sequence over $\{2,3\}$ and $\gamma$ is the frequency of letter $3$ in $\bb$ and $\aa=1^{\omega}$, for three cases, see Tables~\ref{table1}, \ref{table2}, \ref{table3}. Recall that $\tau$ denotes the golden mean, i.e., $\tau=\frac{1+\sqrt{5}}{2}$.
We observe that in the first and second case, where $\alpha$ and $\gamma$ are from different quadratic fields, the upper bound on factor complexity from Theorem~\ref{ternary} is reached and $\mathcal{C}^{ab}_\vv (n) =4$ for each sufficiently large $n$. In the sequel, we will show that this is true if the numbers $1, \alpha, \alpha\gamma$  are rationally independent. In Table~\ref{table3}, where $\alpha=1-\gamma =\frac{1}{\tau}$, the upper bounds are not reached.
\begin{table}[ht]
    \centering
        \setlength{\tabcolsep}{3pt}
\renewcommand{\arraystretch}{1.2}
\setlength{\tabcolsep}{4pt}
\begin{tabular}{|c|c|c|c|c|c|c|c|c|c|c|c|c|c|c|c|c|}
    \hline
     $n$ & 1 & 2 & 3 & 4 & 5 & 6 & 7 & 8 & 9 & 10 & 11 & 12 & 13 & 14 & 15 & 16 \\ 
    \hline
    $P_\alpha(n)+(n+1)\lceil{n\alpha}\rceil$ & 3 & 7 & 11 & 17 & 25 & 33 & 43 & 53 & 65 & 79 & 93 & 109 & 127 & 145 & 165 & 185\\ 
    \hline
    $\textit{C}_\textbf{v}(n)$ & 3 & 7 & 11 & 17 & 25 & 33 & 43 & 53 & 65 & 79 & 93 & 109 & 127 & 145 & 165 & 185\\ 
    \hline
    $\mathcal{C}^{ab}_\vv (n)$  & 3 & 4 & 4 & 4 & 4 & 4 & 4 & 4 & 4 & 4 & 4 & 4 & 4 & 4 & 4 & 4\\ 
    \hline
\end{tabular} 
\caption{$\alpha=\frac{1}{\tau}$ and $\gamma=3-2\sqrt{2}$}\label{table1}
\end{table}

\begin{table}[ht]
    \centering
        \setlength{\tabcolsep}{3pt}
\renewcommand{\arraystretch}{1.2}
\setlength{\tabcolsep}{4pt}
\begin{tabular}{|c|c|c|c|c|c|c|c|c|c|c|c|c|c|c|c|c|}
    \hline
     $n$ & 1 & 2 & 3 & 4 & 5 & 6 & 7 & 8 & 9 & 10 & 11 & 12 & 13 & 14 & 15 & 16 \\ 
    \hline
    $P_\alpha(n)+(n+1)\lceil{n\alpha}\rceil$ & 3 & 5 & 7 & 9 & 11 & 15 & 19 & 23 & 27 & 31 & 35 & 41 & 47 & 53 & 59 & 65\\ 
    \hline
    $\mathcal{C}_\vv(n)$ & 3 & 5 & 7 & 9 & 11 & 15 & 19 & 23 & 27 & 31 & 35 & 41 & 47 & 53 & 59 & 65\\ 
    \hline
    $\mathcal{C}^{ab}_\vv (n)$  & 3 & 3 & 3 & 3 & 3 & 4 & 4 & 4 & 4 & 4 & 4 & 4 & 4 & 4 & 4 & 4\\ 
    \hline
\end{tabular} 
\caption{$\alpha=3-2\sqrt{2}$ and $\gamma=\frac{1}{\tau}$}\label{table2}
\end{table}

\begin{table}[ht]
    \centering
        \setlength{\tabcolsep}{3pt}
\renewcommand{\arraystretch}{1.2}
\setlength{\tabcolsep}{3.8pt}
\begin{tabular}{|c|c|c|c|c|c|c|c|c|c|c|c|c|c|c|c|c|}
    \hline
     $n$ & 1 & 2 & 3 & 4 & 5 & 6 & 7 & 8 & 9 & 10 & 11 & 12 & 13 & 14 & 15 & 16\\ 
    \hline
    $P_\alpha(n)+(n+1)\lceil{n\alpha}\rceil$ & 3 & 7 & 11 & 17 & 25& 33 & 43 & 53 & 65 & 79 & 93 & 109 & 127 & 145 & 165 & 185\\ 
    \hline
    $\mathcal{C}_\vv(n)$ & 3 & 4 & 5 & 6 & 7 & 8 & 9 & 10 & 11 & 12 & 13 & 14 & 15 & 16 & 17 & 18\\ 
    \hline
    $\mathcal{C}^{ab}_\vv (n)$  & 3 & 3 & 3 & 4 & 3 & 3 & 3 & 3 & 4 & 3 & 3& 4 & 3 & 3 & 3 & 3\\ 
    \hline
\end{tabular} 
\caption{$\alpha=\frac{1}{\tau}$ and $\gamma=\frac{1}{\tau^2}$}\label{table3}
\end{table}
\end{example}
\section{Ternary 2-balanced sequences as coding of rectangle exchange transformation}
\label{sec:rectangle_coding}

In this section we will consider, for simplicity of notation, Sturmian sequences with intercept 0, both for the sequence  $\uu=u_0 u_1 u_2 \cdots$ and for the sequence $\bb$
that is used for colouring of $\uu$.

Recall that every Sturmian sequence is a coding of a 2 interval exchange transformation, see~\eqref{transformation}. 
Using the fractional part $\{x\}=x-\lfloor x\rfloor = x \!\!\! \mod 1 $  of a real number $x$, we can write
\begin{equation}\label{eq:unfrac} 
u_n =    
\left\{
\begin{array}{ll}
a  & \quad \text{if} \  \{\alpha n \}  \in [0, 1-\alpha),\\
b  & \quad \text{if}  \ \{\alpha n \}  \in [ 1-\alpha, 1).
\end{array}
\right.
\end{equation}
Note that 
\begin{equation} 
u_n =    
\left\{
\begin{array}{ll}
a  & \quad \text{if and only if  \ \ } \  \lfloor(n+1)\alpha  \rfloor - \lfloor n\alpha  \rfloor = 0 ,\\
b  & \quad \text{if and only if  \ \ }  \ \lfloor(n+1)\alpha  \rfloor - \lfloor n\alpha  \rfloor = 1. 
\end{array}
\right.
\end{equation}
The number of occurrences of the letter $b$ in the prefix of $\uu$ of length $n$ is equal to 
$$
\sum_{k=0}^{n-1}\Bigl(\lfloor(k+1)\alpha  \rfloor - \lfloor k\alpha  \rfloor\Bigr)  =  \lfloor n\alpha  \rfloor. 
$$
Let us express the $n^{th}$ letter of the sequence $\vv = \colour(\uu, {\bf a},{\bf b})$, where $\aa = 1^\omega$
and $\bb$ is a Sturmian sequence over $\{2,3\}$ with $\gamma$ being the frequency of letter $3$. If $u_n = a$, then $v_n = 1$. Let now $u_n = b$. Since for colouring of $\lfloor n\alpha  \rfloor$  letters $b$ of the prefix $u_0u_1 \cdots u_{n-1}$ we have already used the prefix  $b_0b_1 \cdots b_{\lfloor n\alpha  \rfloor - 1}$, the $n^{th}$ letter of $\vv$ will obtain the colour $b_{\lfloor n\alpha  \rfloor}$, i.e., by~\eqref{eq:unfrac}
\begin{equation}\label{eq:tvarV}
v_n = \left\{
\begin{array}{ll}
1  & \quad \text{if} \  \  \{\alpha n \}  \in [0, 1-\alpha),\\
2  & \quad \text{if}  \ \ \{\alpha n \}  \in [ 1-\alpha, 1) \  \ \text{and} \  \  \{\gamma \lfloor \alpha n \rfloor \}  \in [0, 1-\gamma), \\
3 & \quad \text{if}  \ \ \{\alpha n \}  \in [ 1-\alpha, 1) \  \ \text{and} \  \  \{\gamma \lfloor \alpha n \rfloor \}  \in [1-\gamma, 1).
\end{array}
\right.
\end{equation}
Define the following  three rectangles in $\mathbb{R}^2$: 
$$
\begin{array}{l}
R_1= \{(x,y) \in\mathbb{R}^2 \ : \ x \in [0, 1-\alpha)\ \text{and}\  y \in [0,1) \},  \\
R_2 = \{(x,y) \in\mathbb{R}^2 \ : \ x \in [1-\alpha, 1)\ \text{and}\  y \in [0,1-\gamma) \},\\  
R_3 = \{(x,y) \in\mathbb{R}^2 \ : \ x \in [1-\alpha, 1)\ \text{and}\  y \in [1-\gamma, 1) \}.\\  
\end{array}
 \quad
$$

We will show that the sequence $\vv = \colour(\uu, {\bf a},{\bf b})$ is a coding of the rectangle exchange $S: [0,1)^2 \mapsto[0,1)^2 $, given by
\begin{equation}\label{eq:RectTrans}
 S(x,y) =     
\left\{
\begin{array}{ll}
(x+\alpha \!\!\!\mod 1, \ y)  & \quad \text{if} \  x  \in [0, 1-\alpha),\\
(x+\alpha \!\!\!\mod 1, \  y + \gamma \!\!\!\mod 1)  & \quad \text{if} \  x  \in [1-\alpha, 1).\\
\end{array}
\right.   
\end{equation}

The transformation $S$ is depicted in Figure~\ref{f:transformace}.

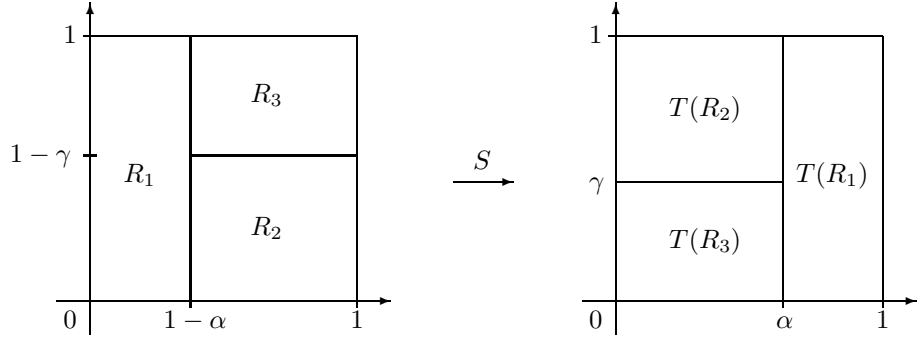
\begin{figure}
    \centering
{
\begin{picture}(120,120)
\setlength{\unitlength}{2.5pt}
\put(0,5){\vector(1,0){50}}
\put(5,0){\vector(0,1){50}}
\put(45,4){\line(0,1){41}}
\put(4,45){\line(1,0){41}}
\put(20,4){\line(0,1){41}}
\put(20,27){\line(1,0){25}}
\put(4,27){\line(1,0){2}}
\put(10,23){$R_1$}
\put(29,15){$R_2$}
\put(29,35){$R_3$}
\put(1,1){$0$}
\put(44,1){$1$}
\put(1,44){$1$}
\put(16,1){$1-\alpha$}
\put(-7,26){$1-\gamma$}
\end{picture}
\qquad
\begin{picture}(30,120)
\setlength{\unitlength}{2.5pt}
\put(2,23){\vector(1,0){9}}
\put(5,25){$S$}
\end{picture}
\qquad
\begin{picture}(120,120)
\setlength{\unitlength}{2.5pt}
\put(0,5){\vector(1,0){50}}
\put(5,0){\vector(0,1){50}}
\put(45,4){\line(0,1){41}}
\put(4,45){\line(1,0){41}}
\put(30,4){\line(0,1){41}}
\put(5,23){\line(1,0){25}}
\put(32,23){$T(R_1)$}
\put(13,33){$T(R_2)$}
\put(13,13){$T(R_3)$}
\put(1,1){$0$}
\put(44,1){$1$}
\put(1,44){$1$}
\put(29,1){$\alpha$}
\put(1,22){$\gamma$}
\end{picture}
}
    \caption{Transformation S from~\eqref{eq:RectTrans}} 
\label{f:transformace}
\end{figure}

\begin{proposition} \label{p:rectangle}
Let $\uu$ be a Sturmian sequence over $\{a,b\}$ with intercept 0 and with frequency of letter $b$ equal to $\alpha$. Let 
$\vv = \colour(\uu, {\bf a},{\bf b})$, where $\aa = 1^\omega$
and $\bb$ is a~Sturmian sequence over $\{2,3\}$ with intercept 0 and with frequency of letter $3$ equal to $\gamma$.
Then for every $n \in \N$  and every  letter $i \in  \{1,2, 3\}$  we have  
$$ 
v_n = i \qquad \text{if and only if} \qquad S^n(0,0) \in R_i, 
$$
where $S$ is the transformation defined in~\eqref{eq:RectTrans}. 
\end{proposition}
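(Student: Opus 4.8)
The plan is to reduce the whole statement to a single explicit description of the orbit of the origin under $S$. Concretely, I would first prove by induction on $n$ the formula
$$S^n(0,0) = \bigl(\{n\alpha\},\ \{\gamma\lfloor n\alpha\rfloor\}\bigr),$$
and then read off the conclusion by comparing this point with the expression~\eqref{eq:tvarV} for $v_n$ and with the definitions of $R_1, R_2, R_3$.

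For the induction, the base case $n=0$ is immediate since $S^0(0,0)=(0,0)$ and both fractional parts vanish. For the inductive step, I would first observe that the first coordinate of $S$ evolves as $x \mapsto x+\alpha \bmod 1$ regardless of the branch, so the first coordinate of $S^n(0,0)$ is simply $T^n(0)=\{n\alpha\}$, matching the formula. The second coordinate is the delicate one: it is left unchanged when $x\in[0,1-\alpha)$ and advanced by $\gamma \bmod 1$ when $x\in[1-\alpha,1)$. The key is that, by~\eqref{eq:unfrac}, the condition $x=\{n\alpha\}\in[1-\alpha,1)$ is exactly the condition $u_n=b$, which by the identity recorded just after~\eqref{eq:unfrac} is equivalent to $\lfloor(n+1)\alpha\rfloor-\lfloor n\alpha\rfloor=1$. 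Thus in the branch where the $y$-coordinate is incremented, $\{\gamma\lfloor n\alpha\rfloor\}+\gamma = \gamma(\lfloor n\alpha\rfloor+1) \bmod 1 = \{\gamma\lfloor(n+1)\alpha\rfloor\}$, while in the branch where it is fixed we have $\lfloor(n+1)\alpha\rfloor=\lfloor n\alpha\rfloor$, so again $\{\gamma\lfloor(n+1)\alpha\rfloor\}$ is the correct value. In both cases the formula propagates, completing the induction.

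With the formula in hand, the equivalence is a direct comparison. The second coordinate $\{\gamma\lfloor n\alpha\rfloor\}$ always lies in $[0,1)$, so $S^n(0,0)\in R_1$ iff $\{n\alpha\}\in[0,1-\alpha)$, which by~\eqref{eq:tvarV} is exactly $v_n=1$. When instead $\{n\alpha\}\in[1-\alpha,1)$, the letter is $2$ or $3$ according to whether $\{\gamma\lfloor n\alpha\rfloor\}$ falls in $[0,1-\gamma)$ or in $[1-\gamma,1)$, and these are precisely the defining conditions of $R_2$ and $R_3$. Hence $v_n=i$ iff $S^n(0,0)\in R_i$ for each $i\in\{1,2,3\}$.

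I expect the only genuine content to be the inductive bookkeeping of the $y$-coordinate; the crux is matching the floor increment $\lfloor(n+1)\alpha\rfloor-\lfloor n\alpha\rfloor\in\{0,1\}$ with the letter $u_n$ via the already-recorded identity, after which everything else is a transcription of definitions.
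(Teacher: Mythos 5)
Your proof is correct and follows essentially the same route as the paper: an induction establishing $S^n(0,0)=\bigl(\{n\alpha\},\,\{\gamma\lfloor n\alpha\rfloor\}\bigr)$ with the same two-branch case analysis on whether $\{n\alpha\}$ lies in $[0,1-\alpha)$ or $[1-\alpha,1)$, followed by a direct comparison with~\eqref{eq:tvarV}. No gaps; the bookkeeping of the floor increment is exactly the paper's argument.
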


\begin{proof}
Thanks to~\eqref{eq:tvarV}, it is sufficient to demonstrate that for any $n\in \N$ it holds that 
$S^n(0,0) = \Bigl( \{\alpha n\},  \ \{\gamma \lfloor \alpha n\rfloor\} \Bigr)$. We use induction on $n \in \N$. 

The step $n =0$ is trivial. Consider $S^{n+1}(0,0)$. 
By induction hypothesis, we obtain
$S^{n+1}(0,0) = S\bigl(S^n(0,0)\bigr)  = S\Bigl( \{\alpha n\},  \ \{\gamma \lfloor \alpha n\rfloor\} \Bigr)$. Denote $(x,y)=\Bigl( \{\alpha n\},  \ \{\gamma \lfloor \alpha n\rfloor\} \Bigr)$.  Clearly $x+ \alpha \!\!\! \mod 1 = \{\alpha n\} +\alpha \!\!\!\mod 1 = \{\alpha(n+1)\}$. 

\begin{itemize}
    \item If $x=\{\alpha n\} \in [0, 1-\alpha) $, then  $\lfloor(n+1)\alpha  \rfloor = \lfloor n\alpha  \rfloor $. 
    
    Therefore $y =\{\gamma \lfloor \alpha n\rfloor\}= \{\gamma \lfloor \alpha (n+1)\rfloor\}$ and $S(x,y) = \Bigl( \{\alpha (n+1)\},  \ \{\gamma \lfloor \alpha (n+1)\rfloor\} \Bigr)$.

    \item If $x=\{\alpha n\} \in [1-\alpha, 1) $, then  $\lfloor(n+1)\alpha  \rfloor = 1+\lfloor n\alpha  \rfloor $. 

    Therefore $y+\gamma \!\!\! \mod 1 = \{\gamma \lfloor \alpha n\rfloor\} + \gamma \!\!\! \mod 1  = 
    \{\gamma (1+\lfloor \alpha n\rfloor)\} =    \{\gamma \lfloor \alpha (n+1)\rfloor\}$.
Again  $S(x,y) = \Bigl( \{\alpha (n+1)\},  \ \{\gamma \lfloor \alpha (n+1)\rfloor\} \Bigr)$, as desired. 
\end{itemize}
\end{proof}

\begin{remark}
Using similar, but more technical argumentation, we can derive that the colouring 
$\vv = \colour(\uu, {\bf a},{\bf b})$ 
of a Sturmian sequence $\uu$ with intercept $\rho_1$
by $\aa = 1^\omega$
and $\bb$,  which is a Sturmian sequence over $\{2,3\}$ with intercept $\rho_2$, satisfies
\begin{equation}
v_n = \left\{
\begin{array}{ll}
1  & \quad \text{if} \  \  \{\alpha n +\rho_1\}  \in [0, 1-\alpha),\\
2  & \quad \text{if}  \ \ \{\alpha n +\rho_1\}  \in [ 1-\alpha, 1) \  \ \text{and} \  \  \{\gamma \lfloor \alpha n \rfloor +\rho_2\}  \in [0, 1-\gamma), \\
3 & \quad \text{if}  \ \ \{\alpha n +\rho_1\}  \in [ 1-\alpha, 1) \  \ \text{and} \  \  \{\gamma \lfloor \alpha n \rfloor +\rho_2\}  \in [1-\gamma, 1),
\end{array}
\right.
\end{equation}
and $\vv$ is a coding of the point $(\rho_1,\rho_2)$ under the same rectangle exchange $S$.
\end{remark}

\begin{proposition}\label{p:huste}
If $S$ is the transformation defined in~\eqref{eq:RectTrans} and the three numbers  $1, \alpha$ and  $ \alpha\gamma$ are rationally independent, then for every $(x_0,y_0) \in [0,1)^2$ the set  $\{S^n(x_0,y_0) : n \in \N\}$ is dense in  $[0,1)^2$.       
\end{proposition}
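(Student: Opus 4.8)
The plan is to exhibit $S$ as a conjugate, through an irrational shear of the torus, of the rigid rotation $R\colon\mathbb{T}^2\to\mathbb{T}^2$ given by $R(x,y)=(\{x+\alpha\},\{y+\alpha\gamma\})$, and then to exploit that the rational independence of $1,\alpha,\alpha\gamma$ makes $R$ minimal. Concretely, I would introduce the shear $h\colon[0,1)^2\to[0,1)^2$ defined by $h(x,y)=(x,\{y+\gamma x\})$, a Lebesgue-measure-preserving bijection with inverse $h^{-1}(x,y)=(x,\{y-\gamma x\})$, and verify the intertwining identity $h\circ S=R\circ h$. This is a direct two-case computation following the definition \eqref{eq:RectTrans}: for $x\in[0,1-\alpha)$ both $h(S(x,y))$ and $R(h(x,y))$ equal $(x+\alpha,\{y+\gamma x+\alpha\gamma\})$, while for $x\in[1-\alpha,1)$ both equal $(x+\alpha-1,\{y+\gamma x+\alpha\gamma\})$; here the increment by $\gamma$ in the second coordinate of $S$ exactly cancels the $-\gamma$ produced by shearing $x+\alpha-1$ rather than $x+\alpha$. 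Consequently $S^n=h^{-1}\circ R^n\circ h$ for every $n\in\N$.

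Next I would record the minimality of $R$. Since $1,\alpha,\alpha\gamma$ are rationally independent, the Kronecker--Weyl theorem guarantees that the orbit $\{R^n(z):n\in\N\}$ of every point $z\in\mathbb{T}^2$ is dense (indeed equidistributed) in $[0,1)^2$. In particular this holds for the point $z=h(x_0,y_0)$.

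The delicate point, and the main obstacle, is that $h$ is \emph{not} a homeomorphism of the torus: the shear by the irrational number $\gamma$ does not preserve the lattice $\mathbb{Z}^2$, so $h$ has a jump discontinuity along the curve $\{(x,y):y+\gamma x\in\mathbb{Z}\}$, and density cannot be transported through $h^{-1}$ by a naive topological-conjugacy argument. I would circumvent this with a local open-image observation. Fix a nonempty open box $B=(a_1,b_1)\times(a_2,b_2)\subseteq[0,1)^2$; by $S^n=h^{-1}\circ R^n\circ h$ it suffices to find $n$ with $R^n(h(x_0,y_0))\in h(B)$. Pick any $(x^*,y^*)\in B$ with $y^*+\gamma x^*\notin\mathbb{Z}$, and a small enough sub-box $B'\subseteq B$ around it on which $y+\gamma x$ stays inside a single open unit interval $(m,m+1)$; then $h$ restricted to $B'$ is the affine map $(x,y)\mapsto(x,y+\gamma x-m)$, a homeomorphism onto an open parallelogram $h(B')\subseteq h(B)$. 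Hence $h(B)$ contains a nonempty open set, and by the density of the $R$-orbit of $h(x_0,y_0)$ there is an $n$ with $R^n(h(x_0,y_0))\in h(B')\subseteq h(B)$, that is, $S^n(x_0,y_0)\in B$. Since $B$ was an arbitrary nonempty open box, the orbit $\{S^n(x_0,y_0):n\in\N\}$ is dense in $[0,1)^2$, which is the claim.
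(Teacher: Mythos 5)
Your proposal is correct and takes essentially the same route as the paper: your shear $h(x,y)=(x,\{y+\gamma x\})$ is precisely the inverse of the map $\Phi(x,y)=(x,\{y-\gamma x\})$ that the paper (citing Haller) uses to intertwine $S$ with the torus rotation by $(\alpha,\alpha\gamma)$, and both proofs then conclude via the Kronecker approximation theorem. The only difference works in your favour: the paper asserts without further justification that density transfers through the discontinuous conjugacy, whereas your local open-image argument around the discontinuity line of the shear explicitly fills in that step.
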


\begin{proof}
In~\citep{Haller}, it is shown that the rectangle exchange transformation defined in~\eqref{eq:RectTrans} is measure-theoretically isomorphic to the rotation $R_\theta$ on the torus $\Omega={\mathbb R}^2/{\mathbb Z}^2=[0,1)\times[0,1)$ by the vector $\theta=(\alpha,\alpha\gamma)$. In particular, one can check that $\Phi\circ R_{\theta} = S\circ\Phi$, where $\Phi:\Omega\to\Omega$ is given by 
$$
\Phi(x,y) = (x, y -\gamma x \!\!\!\mod 1) .
$$
From that, we derive that for every $k\geq 1$, $\Phi\circ R_\theta^k = S^k\circ\Phi$.
Consequently, the set $\{S^k(x_0,y_0):k\in\N\}$ is dense in $\Omega$, whenever $\{ R_\theta^k(x_1,y_1) : k\in\N \}$ is dense in  $\Omega$, where $(x_1,y_1)=\Phi^{-1}(x_0,y_0)=(x_0,y_0+\gamma x_0 \!\!\!\mod 1)$.

Now we can use the famous Kronecker Approximation Theorem in ${\mathbb R}^2$, see e.g.~\citep[p. 15]{DrTi97}, which states
that
if $\theta=(\theta_1,\theta_2)\in{\mathbb R}^2$ is such that $1,\theta_1,\theta_2$ are rationally independent,  then for any $(x_1,y_1)\in\Omega$ the sequence $\big(R_\theta^k(x_1,y_1)\big)_{k\geq 0}$ is dense in $\Omega$. 
\end{proof}

Using the interpretation by rectangle exchange transformation, we are able to provide conditions on letter frequencies guaranteeing that the upper bound on factor and abelian complexity from Theorem~\ref{ternary} is reached. Example~\ref{ex:ruzna_alpha_gamma} may serve as illustration of this result.

\begin{theorem} \label{thm:ternary} 
Let $\uu$ be a Sturmian sequence over $\{a,b\}$ with frequency of letter $b$ equal to $\alpha$. Let 
$\vv = \colour(\uu, {\bf a},{\bf b})$, where $\aa = 1^\omega$ and $\bb$ is a Sturmian sequence over $\{2,3\}$ with frequency of letter $3$ equal to $\gamma$.
Suppose that 1, $\alpha$ and $\alpha\gamma$ are rationally independent. Then 
$$
\mathcal{C}_\vv (n) = P_\alpha(n) +  (n+1) \lceil n\alpha \rceil\qquad \text{ for each }n\in\N, \, n \geq 1, 
$$
and 
$$
\mathcal{C}^{ab}_\vv (n) = 4 \qquad \text{ for each } n \in \N, n\geq \big\lceil\tfrac1\alpha\big\rceil .  
$$
\end{theorem}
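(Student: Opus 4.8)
The plan is to promote the upper bounds of Theorem~\ref{ternary} to equalities by proving that \emph{every} word counted in the derivation of those bounds actually occurs in $\vv$; the engine for this is the density of the orbit provided by Proposition~\ref{p:huste}. Since the language of a Sturmian sequence does not depend on its intercept, I may assume both $\uu$ and $\bb$ have intercept $0$, so that by Proposition~\ref{p:rectangle} the sequence $\vv$ is exactly the coding of the orbit $\{S^n(0,0):n\in\N\}$ under the rectangle exchange $S$. A word $z=z_0z_1\cdots z_{n-1}$ over $\{1,2,3\}$ is then a factor of $\vv$ if and only if the orbit meets the cell $C_z=\bigcap_{j=0}^{n-1}S^{-j}(R_{z_j})$, so the whole problem reduces to describing these cells and showing each nonempty one is visited.

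The key observation is that $S$ is a skew product: on the first coordinate it acts as the pure rotation $x\mapsto x+\alpha \bmod 1$, independently of $y$. Hence the $x$-itinerary of a starting point both determines and is determined by the projected factor $w=\pi(z)$, and the set of $x$ producing it is the usual Sturmian interval $J_w$; there are $n+1$ of these, matching the $n+1$ factors of $\uu$ of length $n$. Fixing $x\in J_w$, the second coordinate is rotated by $\gamma$ precisely at the $\ell:=|w|_b$ positions where the letter is $b$, so the colours read off the $b$-positions form the factor of $\bb$ of length $\ell$ starting at height $y_0$; as $y_0$ ranges over $[0,1)$ these are exactly the $\ell+1$ factors of $\bb$ of length $\ell$, partitioning $[0,1)$ into $\ell+1$ subintervals. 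Consequently every nonempty cell $C_z$ is a genuine product rectangle $J_w\times K$ with nonempty interior, and the admissible words $z$ are precisely the $\sum_w(|w|_b+1)$ colourings already tallied in Theorem~\ref{ternary}.

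Now the rational independence of $1,\alpha,\alpha\gamma$ lets me invoke Proposition~\ref{p:huste}: the orbit $\{S^n(0,0)\}$ is dense in $[0,1)^2$, hence meets every cell $C_z$ of nonempty interior. Thus each admissible colouring occurs, and summing over the two Parikh types of $\uu$ (there are $P_\alpha(n)$ factors with $|w|_b=\lceil n\alpha\rceil$ and $n+1-P_\alpha(n)$ with $|w|_b=\lfloor n\alpha\rfloor$) reproduces the upper bound as an equality, $\mathcal{C}_\vv(n)=P_\alpha(n)+(n+1)\lceil n\alpha\rceil$.

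For the abelian statement I would first pin down the four candidate Parikh vectors: a factor $v$ of length $n$ has $|v|_1=n-\ell$ with $\ell\in\{\lfloor n\alpha\rfloor,\lceil n\alpha\rceil\}$, and for each $\ell$ its colouring lies in one of the two abelian classes of length-$\ell$ factors of $\bb$, i.e. $|v|_3\in\{\lfloor\ell\gamma\rfloor,\lceil\ell\gamma\rceil\}$ (Remark~\ref{parikh} applied to $\bb$). The first coordinate separates the two values of $\ell$, and within each $\ell$ the two values of $|v|_3$ differ by $1$ because $\ell\gamma$ is irrational, so the four vectors are distinct exactly when both values of $\ell$ are at least $1$; this forces $\lfloor n\alpha\rfloor\ge 1$, equivalently $n\ge\lceil 1/\alpha\rceil$. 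Both values of $\ell$ are attained in $\uu$ since $1\le P_\alpha(n)\le n$ by Lemma~\ref{funkceP} (the index $k=n$ is always counted), and by the previous paragraph all colourings of each such $w$ occur, so both abelian classes are realized for both $\ell$; hence $\mathcal{C}^{ab}_\vv(n)=4$ for $n\ge\lceil 1/\alpha\rceil$. I expect the main obstacle to be the bookkeeping in the second paragraph---verifying that the cells $C_z$ really are product rectangles, so that \emph{nonempty} coincides with \emph{nonempty interior} (which is what makes the density argument bite), and that the height-partition over each $J_w$ has exactly $|w|_b+1$ atoms rather than accidentally fewer.
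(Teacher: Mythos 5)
Your proposal is correct and follows essentially the same route as the paper: reduce to intercept $0$, use Proposition~\ref{p:rectangle} to view $\vv$ as the coding of the orbit of $(0,0)$ under $S$, exploit the skew-product structure so that each pair $(w,z)$ with $w\in\LL(\uu)$, $z\in\LL(\bb)$, $|z|=|w|_b$ corresponds to a product region $I_w\times J_z$ of nonempty interior, and invoke the density from Proposition~\ref{p:huste} to realize every such pair, then count. Your abelian argument likewise matches the paper's (yours just spells out more explicitly why the four Parikh vectors are distinct exactly when $\lfloor n\alpha\rfloor\geq 1$).
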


\begin{proof} 
%
Realize that projection of the transformation $S$ defined in~\eqref{eq:RectTrans} to the first coordinate is equal to the exchange $T$ of two intervals with parameter $\alpha$ as defined in~\eqref{transformation}. At the same time, projection of $S$ to the second coordinate is either constant, or equal to the exchange of two intervals with parameter $\gamma$. In order to distinguish between them, we will write $T_\alpha$, $T_\gamma$, respectively.

For an arbitrary fixed $w\in\LL(\uu)$ of length $|w|=n$ and arbitrary $z\in\LL(\bb)$ of length $|z|=|w|_b$ we wish to find a factor $v\in\LL(\vv)$ of length $n$ such that $\pi(v)=w$ and $\Pi(v)=z$, where $\pi:\{1,2,3\}^*\to\{a,b\}^*$ and $\Pi:\{1,2,3\}^*\to\{2,3\}^*$ are projections defined 
in Section~\ref{sec:balancedness}.

Recall the partition of the interval $[0,1)$ into subintervals with the same coding under two interval exchange transformation. 
For a given factor $w\in\LL(\uu)$ of length $n$, denote
$$
I_w = \{x\in[0,1) : \text{ the sequence $x$, $T_\alpha(x)$, $T_\alpha^2(x)$, \dots, $T^{n-1}_\alpha(x)$ is coded by } w\}.
$$
For a factor $z\in\LL(\bb)$ of length $k=|w|_b$, we define
$$
J_z = \{y\in[0,1) : \text{ the sequence $y$, $T_\gamma(y)$, $T_\gamma^2(x)$, \dots, $T^{k-1}_\gamma(x)$ is coded by } z\}.
$$
Since, by assumption, 1, $\alpha$ and $\alpha\gamma$ are rationally independent, we use Proposition~\ref{p:huste} to derive that there exists an index $m\in\N$ such that $S^m(0,0)\in I_w\times J_z$. Denote $S^m(0,0)=(x_0,y_0)$.
It is straightforward to verify using~\eqref{eq:RectTrans} that 
the desired factor $v$ can be found as the symbolic coding of 
$(x_0,y_0)$, $S(x_0,y_0)$, $S^2(x_0,y_0)$, \dots, $S^{n-1}(x_0,y_0)$.

Consider now abelian complexity. The number of occurrences of letter $b$ in factors of $\uu$ of length $n$ is either $\lfloor n\alpha\rfloor$, or $\lceil n\alpha \rceil$ (both cases occur). 
It implies that for $n \geq \lceil \frac{1}{\alpha}\rceil$, each factor of length $n$ in $\uu$ contains at least one $b$. 
There are two distinct Parikh vectors for factors of length $n$ in $\uu$. After colouring, each of them gives rise to two distinct Parikh vectors in $\vv$. 
Consequently, $\mathcal{C}^{ab}_\vv (n) = 4$ for $n \geq \lceil \frac{1}{\alpha}\rceil$.
For $1\leq n\leq\lfloor \frac{1}{\alpha}\rfloor$, each factor of length $n$ in $\uu$ contains either no letter $b$, or one letter $b$. The first one gives rise to one Parikh vector in $\vv$, the other one to two Parikh vectors. Therefore $\mathcal{C}^{ab}_\vv (n) =3$ for $1\leq n \leq\lfloor \frac{1}{\alpha}\rfloor$.
\end{proof}

\section{Symbolic computation of factor complexity} \label{sec:numeration_systems}

When determining factor complexity in Theorem~\ref{thm:ternary}, we need to compute the values of the function $P_\alpha$.  
They may be calculated using the formula from Lemma \ref{funkceP}. A~disadvantage is the fact that one has to determine the fractional part of $k\alpha$ for all $k\leq n$ with sufficient precision.

In the sequel, for a class of irrational numbers $\alpha \in (0,1)$, we will show how to compute the values $P_\alpha(n)$ in a symbolic way using the greedy representation of integers in a non-standard numeration system associated with $\alpha$. More precisely, we will consider $\alpha$ such that the number $\beta = \frac{1}{\alpha}$ is a quadratic non-simple Parry unit, i.e., 
\begin{equation}\label{eq:beta}
    \text{$\beta=\frac{m+\sqrt{m^2-4}}{2}$ is equal to the larger root of the polynomial $x^2-mx+1$,}
    \end{equation}
     where $m\in \mathbb N, m \geq 3$.
    We associate with $\beta$ the sequence 
 ${\bf U}=\bigl(U_k\bigr)_{k\in \N}$ defined by recurrence relation  
 \begin{equation}\label{U}
 U_{-1} = 0,\ \ U_0 = 1,\quad \text{and}  \quad  U_{k+1} = mU_{k}-U_{k-1} \ \ \text{ for every $k \in \N$.}
 \end{equation} 
Now for $\beta$, we define the $\beta$-expansion of real numbers from $[0,1)$,  and  for the sequence  ${\bf U}$, we define the ${\bf U}$-expansion of non-negative integers.  For more details on  both types of expansion see Chapter 7.3 by Frougny~\citep{Fr02}.
\bigskip

A $\beta$-representation of a number $x \in [0,1)$  is an infinite sequence $(x_i)_{i\geq 1}$ of non-negative integers such that $x = \sum_{i=1}^{+\infty} x_i\beta^{-i}$.  A particular $\beta$-representation -- called the  $\beta$-expansion  -- can be computed by the greedy algorithm which uses a transformation $T_\beta$ defined  by $T_\beta(x) =  \beta x - \lfloor \beta \,x \rfloor \ \text{for every } x \in [0,1). $

If we put   $a_i = \lfloor \beta \,T^{i-1}(x) \rfloor$ for every $i \in \N , i\geq 1$,  then $x = \sum_{i=1}^{+\infty}  a_i\beta^{-i}$  and $a_i  < \beta$  for every $i\geq 1$. The string $(a_i)_{i\geq 1}$ is denoted by $d_\beta(x)$. The function $d_\beta(x)$ is increasing with respect to the lexicographical order.   In the space of  infinite sequences over $\{ a \in \N: a< \beta\}$ equipped with the product topology, we can define  the quasi-greedy expansion of 1 as $d^*_\beta(1)  = \lim\limits_{x\to 1-}d_{\beta}(x)$.   

As proven by Parry~\citep{Pa1960}, 
a sequence of non-negative integers  $(x_i)_{i\geq 1}$  is the $\beta$-expansion of a~number $x \in [0,1)$ if and only if \begin{equation}\label{lex}
x_ix_{i+i}x_{i+2}\cdots\prec_{lex} d^*_\beta(1) \ \ \text{for every } \ i \in \N, i \geq 1.\end{equation} It is easy to verify that for $\beta =\frac{m+\sqrt{m^2-4}}{2}$, the quasi-greedy expansion of 1 is $d^*_\beta(1) = (m-1)(m-2)^\omega$.  

\begin{example}\label{ex:goldenmean}
For $m=3$, $\beta$ is the larger root of $x^2-3x+1=0$, i.e., $\beta=\tau^2$, where $\tau=\frac{1+\sqrt{5}}{2}$ is the golden mean, and $d^*_\beta(1) = 21^\omega$.
For $m=4$, $\beta$ is the larger root of $x^2-4x+1=0$, i.e., $\beta=2+\sqrt{3}$, and $d^*_\beta(1) = 32^\omega$.
\end{example}

\medskip

A ${\bf U}$-representation of a number  $n\in \N$ is a finite sequence of non-negative integers  $a_Na_{N-1}\cdots a_0$ such that  $ n=\sum_{k=0}^N a_kU_k$. If moreover,  $a_N \neq 0$ and $\sum_{k=0}^i a_k U_k < U_{i+1}$ for every $i \leq N$, then the representation is called the ${\bf U}$-expansion of $n$ and will be denoted 
$$(n)_{\bf U} =a_Na_{N-1}\cdots a_1a_0 \,. $$
Notice that $n=0$ does not have any representation with a non-zero leading coefficient, therefore we define its ${\bf U}$-expansion to be the empty word.

The ${\bf U}$-expansion of $n$  can be computed by the greedy algorithm. Our sequence ${\bf U}$ satisfies for  every $\ell, k \in \N, k\geq 1$,  the inequalities $$mU_\ell  \geq   U_{\ell+1}\quad \text{and} \quad U_{\ell+k+1} + U_{\ell-1} \geq U_{\ell+k+1}.$$ Note that
$$(m-1)U_{\ell+k} + (m-2)U_{\ell+k-1}+\ldots +(m-2)U_{\ell+1} + (m-1)U_{\ell}=U_{\ell+k+1} + U_{\ell-1}.$$
\noindent Therefore the finite sequence $(n)_{\bf U}$ is 
\begin{equation}\label{Uexp}
\text{a word  over $\{0,1,\dots, m-1\}$ not containing  $(m-1)(m-2)^{k-1}(m-1)$  as factor.}
\end{equation}
 In fact, the above property under the assumption that the leading coefficient is non-zero characterizes the ${\bf U}$-expansion  of  integers. 
\begin{proposition}\label{fraction} Let  ${\bf U}=\bigl(U_k\bigr)_{k\in \N}$  be the sequence defined in \eqref{U} and $\beta$ be the number defined in~\eqref{eq:beta}. Let $n \in \N$  be an integer with   $(n)_{\bf U} =a_Na_{N-1}\cdots a_1a_0 $. Then the ${\bf U}$-expansion of the integer part  of  $ \tfrac{n}{\beta}$ and the $\beta$-expansion of the fractional part of  $ \tfrac{n}{\beta}$ are  
$$
\Bigl(\lfloor \tfrac{n}{\beta} \rfloor \Bigr)_{\bf U}  = a_Na_{N-1}\cdots a_1\qquad \text{and} \qquad  d_\beta \Bigl(\bigr\{ \tfrac{n}{\beta} \bigr\}\Bigl) = a_0a_1\cdots a_{N-1}a_N 0^\omega. 
  $$ 
\end{proposition}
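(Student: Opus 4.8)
The plan is to reduce everything to a single algebraic identity relating division by $\beta$ to a shift of the sequence ${\bf U}$, and then to read off both expansions from that identity. First I would record the Binet‑type formula for the recurrence \eqref{U}: since $\beta$ and $\beta^{-1}$ are the two roots of $x^2-mx+1$ and $U_{-1}=0$, $U_0=1$, one has $U_k = \frac{\beta^{k+1}-\beta^{-(k+1)}}{\beta-\beta^{-1}}$. A direct computation then yields the key identity
$$
\frac{U_k}{\beta} = U_{k-1} + \beta^{-(k+1)} \qquad \text{for every } k\geq 0.
$$
Equivalently this follows from $\beta^2=m\beta-1$ together with the recurrence, checked by induction; the term $U_{-1}=0$ makes it valid already at $k=0$.

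Summing this identity against the digits of $(n)_{\bf U}=a_N\cdots a_0$ and using $U_{-1}=0$ to absorb the $k=0$ contribution gives the splitting
$$
\frac{n}{\beta} = \sum_{k=0}^{N} a_k \frac{U_k}{\beta} = \underbrace{\sum_{j=0}^{N-1} a_{j+1}U_j}_{=:A} \; + \; \underbrace{\sum_{k=0}^{N} a_k\beta^{-(k+1)}}_{=:B},
$$
where $A$ is a non‑negative integer whose candidate ${\bf U}$‑string (coefficient of $U_{N-1}$ first) is $a_Na_{N-1}\cdots a_1$, and $B$ has candidate $\beta$‑string $a_0a_1\cdots a_N0^\omega$. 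It then remains to show $A=\lfloor n/\beta\rfloor$ and $B=\{n/\beta\}$, for which it suffices to prove $B\in[0,1)$ and that the two strings are the admissible (greedy) expansions; since $A\in\N$ and $B\in[0,1)$, the equality $n/\beta=A+B$ forces $A=\lfloor n/\beta\rfloor$ and $B=\{n/\beta\}$.

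For the integer part, the word $a_N\cdots a_1$ is a prefix of $(n)_{\bf U}$, hence inherits the non‑zero leading digit (when $N\geq 1$; for $N=0$ it is the empty word coding $0$) and the forbidden‑factor condition of \eqref{Uexp}, so by the characterization quoted there it is the ${\bf U}$‑expansion of $A$. For the fractional part I would invoke Parry's condition \eqref{lex}: the string $a_0a_1\cdots a_N0^\omega$ is the $\beta$‑expansion of $B$ (and in particular $B\in[0,1)$) precisely when each of its suffixes is $\prec_{lex} d^*_\beta(1)=(m-1)(m-2)^\omega$. Comparing a suffix $a_{i-1}a_i\cdots a_N0^\omega$ digit by digit against $(m-1)(m-2)^\omega$, one sees that it fails to be strictly smaller exactly when it begins with a block $(m-1)(m-2)^{k-1}(m-1)$ (the trailing zeros, being $<m-2$, can never sustain equality up to an $m-1$); hence some suffix violates the condition if and only if the word $a_0a_1\cdots a_N$ contains such a factor.

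The crux — and the step I expect to be the main obstacle — is linking this lexicographic analysis back to the hypothesis on $(n)_{\bf U}$. The decisive observation is that the forbidden factor $(m-1)(m-2)^{k-1}(m-1)$ is a palindrome, so $a_0a_1\cdots a_N$ contains it if and only if its reversal $(n)_{\bf U}=a_N\cdots a_0$ does. Because $(n)_{\bf U}$ is a genuine ${\bf U}$‑expansion, \eqref{Uexp} guarantees the absence of this factor; therefore no suffix violates Parry's condition, the $\beta$‑string is admissible, and $B<1$. This legitimises the splitting and completes the proof. The points requiring care are the boundary where a would‑be forbidden block runs into the trailing zeros, and the degenerate cases $N=0$ and digits equal to $m-1$ at the very end — all resolved by the remark that a trailing $0$ already makes the suffix strictly smaller, so only factors lying entirely inside $a_0\cdots a_N$ can matter.
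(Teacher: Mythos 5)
Your proof is correct and follows essentially the same route as the paper's: the key identity $\tfrac{1}{\beta}U_k = U_{k-1}+\beta^{-(k+1)}$, the resulting splitting of $\tfrac{n}{\beta}$ into an integer part with digit string $a_N\cdots a_1$ and a fractional part with digit string $a_0a_1\cdots a_N0^\omega$, and the verification of admissibility via Parry's condition \eqref{lex} and the forbidden-factor condition \eqref{Uexp}. In fact you spell out explicitly (palindromicity of the forbidden factor $(m-1)(m-2)^{k-1}(m-1)$, the role of the trailing zeros) what the paper's proof only asserts in one sentence, so your write-up is, if anything, more complete.
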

\begin{proof}  
If $n<m$, then $(n)_{\bf U}=a_0=n$ and $\lfloor \tfrac{n}{\beta} \rfloor=0$ and $\bigr\{ \tfrac{n}{\beta} \bigr\}=\frac{n}{\beta}$. Thus the statement holds.
In the rest of the proof, assume $n\geq m$.
Let us prove by induction the formula 
\begin{equation}\label{eq:induction}    
\tfrac{1}{\beta}U_k= U_{k-1} + \beta^{-(k+1)}
\end{equation}
for all $k\in \N$. 
For $k=0$, we have $\tfrac{1}{\beta}U_0= U_{-1} + \beta^{-1}$. Since $U_{-1}=0$ and $U_0=1$, it holds. For $k=1$, we have $\tfrac{1}{\beta}U_1= U_{0} + \beta^{-2}$. Inserting $U_1=m$ and multiplying by $\beta^2$ leads to $m\beta= \beta^2+ 1$, which is satisfied by definition of $\beta$~\eqref{eq:beta}. Assume the equality~\eqref{eq:induction} holds for all $k\leq j$, where $j\geq 1$ is a~fixed integer. Let us prove that then also
$\tfrac{1}{\beta}U_{j+1}= U_{j} + \beta^{-(j+2)}$. 
$$\begin{array}{rcl}
\tfrac{1}{\beta}U_{j+1}&=&\tfrac{1}{\beta}(mU_{j}-U_{j-1})\\
&=&m(U_{j-1}+\beta^{-(j+1)})-(U_{j-2}+\beta^{-j})\\
&=&mU_{j-1}-U_{j-2}+\frac{m\beta-\beta^2}{\beta^{j+2}}\\
&=&U_j+\beta^{-(j+2)},
\end{array}$$
where we used the recurrence relation~\eqref{U} in the first equality, the induction assumption in the second equality, and relations \eqref{U} and \eqref{eq:beta} in the last equality.

Hence 
      $$\frac{n}{\beta} = \frac{1}{\beta}\sum_{k=0}^{N}a_kU_k  = \sum_{k=0}^Na_kU_{k-1} +  \sum_{k=0}^Na_k \beta^{-(k+1)}  = \sum_{k=1}^Na_kU_{k-1} + \sum_{k=1}^{N+1}a_{k-1} \beta^{-k}\,.$$
      As the finite string $a_Na_{N-1}\cdots a_1a_0$ satisfies \eqref{Uexp},  the infinite string  $a_0a_1a_2\cdots a_N 0^\omega $ fulfills  \eqref{lex} with  $d^*_\beta(1) = (m-1)(m-2)^\omega$. It implies that  $a_0a_1a_2\cdots a_N 0^\omega $ is the $\beta$-expansion of a number  $x \in [0,1)$.  Together with the fact that $ \sum_{k=1}^Na_kU_{k-1} $ is an integer and the finite string  $a_Na_{N-1}\cdots a_1$ satisfies \eqref{Uexp} as well, it gives both parts of the statement.  
\end{proof}

\begin{theorem}\label{formule} Let  ${\bf U}=\bigl(U_k\bigr)_{k\in \N}$  be the sequence defined in \eqref{U} and $\alpha = \frac{1}{\beta}$ for $\beta$ given in~\eqref{eq:beta}. If the ${\bf U}$-expansion of a number $n \in \N$ is   $(n)_{\bf U} =a_Na_{N-1}\cdots a_1a_0 $, then 
$$
P_\alpha(n) = a_{N}\bigl(r_N+1\bigr) + a_{N-1}\bigl(r_{N-1}+1\bigr)+ \cdots + a_{1}\bigl(r_1+1\bigr) + a_0, 
$$
where $r_i$  is an integer having ${\bf U}$-representation $a_0a_1\cdots a_{i-1}$ for every $i \in \N, 1\leq i\leq N$.  
\end{theorem}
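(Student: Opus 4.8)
The plan is to turn the arithmetic count defining $P_\alpha$ into a purely lexicographic count on $\mathbf U$-expansions. Writing $\beta=1/\alpha$ (so that $k\alpha-\lfloor k\alpha\rfloor=\{k/\beta\}$), Lemma~\ref{funkceP} gives $P_\alpha(n)=\#\{k\in\{1,\dots,n\}:\{k/\beta\}\le\{n/\beta\}\}$. By Proposition~\ref{fraction} the $\beta$-expansion of $\{k/\beta\}$ is the \emph{reversal} of $(k)_{\bf U}$ followed by $0^\omega$, and likewise $d_\beta(\{n/\beta\})=a_0a_1\cdots a_N0^\omega$. Since $d_\beta$ is increasing for the lexicographic order, the inequality $\{k/\beta\}\le\{n/\beta\}$ is equivalent to the reversed word of $(k)_{\bf U}$ being $\preceq_{lex}a_0a_1\cdots a_N$; call this condition~(B). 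Thus $P_\alpha(n)$ is the number of $k\in\{1,\dots,n\}$ whose reversed expansion satisfies~(B).

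Next I would evaluate this count digit by digit. Every $k\in\{0,\dots,n-1\}$ has a unique highest position $i$ at which its expansion $e_N\cdots e_0$ first drops below $(n)_{\bf U}$, i.e. $e_j=a_j$ for $j>i$, $e_i<a_i$, and $e_{i-1}\cdots e_0$ is an arbitrary admissible low part representing an integer in $\{0,\dots,U_i-1\}$. Reading reversed words from the least significant digit, one checks that (B) holds for such a $k$ if and only if its low part already satisfies the analogous reversed-lex inequality against $a_0\cdots a_{i-1}$, the branch digit $e_i$ being irrelevant. To count admissible low parts with this property I would use the reversal map $g$ sending an integer in $\{0,\dots,U_i-1\}$ to the one whose $\mathbf U$-expansion is the reversed word; because the forbidden patterns $(m-1)(m-2)^{j}(m-1)$ from~\eqref{Uexp} are palindromic, $g$ maps admissible words to admissible words and is an involution, hence a bijection of $\{0,\dots,U_i-1\}$. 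Under $g$ the low-part condition becomes $g(k')\le r_i$, so exactly $r_i+1$ low parts qualify (with the convention $r_0=0$). As $e_i$ ranges over the $a_i$ values $\{0,\dots,a_i-1\}$, position $i$ contributes $a_i(r_i+1)$.

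The step I expect to be the main obstacle is justifying that the branch digit and the low part decouple, i.e. that every admissible low part may be appended below any $e_i<a_i$ without creating a forbidden factor straddling the junction with the fixed high part $a_N\cdots a_{i+1}$. The resolution, which I would isolate as a short lemma, is that $e_i<a_i\le m-1$ forces $e_i\ne m-1$; a forbidden factor $(m-1)(m-2)^{j}(m-1)$ crossing position $i$ would therefore require $e_i=m-2$, hence $a_i=m-1$ together with a maximal run of $(m-2)$'s in the high part, which would already place such a forbidden factor inside the admissible word $(n)_{\bf U}$ --- a contradiction. Thus a branch word is admissible exactly when its low part is, the low parts range freely, and the product $a_i(r_i+1)$ is correct. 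Summing over $i=0,\dots,N$ counts the $k\in\{0,\dots,n-1\}$ satisfying~(B); exchanging $k=0$ (which satisfies~(B)) for $k=n$ (which also does) shifts the range to $\{1,\dots,n\}$ without changing the total, yielding $P_\alpha(n)=\sum_{i=0}^N a_i(r_i+1)$, which is the asserted formula. The remaining ingredients --- monotonicity of $d_\beta$, order-preservation of greedy $\mathbf U$-expansions under $\preceq_{lex}$, and reversal-invariance of admissibility --- are standard and used as quoted.
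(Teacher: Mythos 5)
Your proposal is correct and takes essentially the same route as the paper's proof: both reduce $P_\alpha(n)$ via Lemma~\ref{funkceP} and Proposition~\ref{fraction} to lexicographic conditions on ${\bf U}$-expansions and their reversals, decompose the qualifying words by the highest position $i$ at which the expansion drops below $(n)_{\bf U}$, and count $a_i(r_i+1)$ words per position using the palindromicity of the forbidden factors $(m-1)(m-2)^{k-1}(m-1)$, with only a cosmetic difference in how the boundary terms $k=0$ and $k=n$ are bookkept. The one point where you go beyond the paper is your junction lemma showing that an admissible low part can be glued below any branch digit $x<a_i$ without creating a forbidden factor straddling position $i$ --- the paper asserts this admissibility without justification, so your argument supplies that missing detail.
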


\begin{proof}   We say that a finite string  $b_jb_{j-1}\cdots b_1b_0$    is admissible if it satisfies \eqref{Uexp}. In other words, if  we erase the leading zeros (if any) in  $b_jb_{j-1}\cdots b_1b_0$,    we get the ${\bf U}$-expansion of some integer.  
By~\eqref{Uexp},   a~string $B:= b_jb_{j-1}\cdots b_1b_0$
 is admissible  if and only if its reverse   $\overline{B}:=b_0b_1\cdots b_{j-1}b_j$ is admissible.

Let us fix  $k\in \N, 1\leq k\leq n$. Such $k$  has exactly one admissible ${\bf U}$-representation of length $N+1$, say $b_Nb_{N-1}\cdots b_1b_0$.  The greediness of the  ${\bf U}$-expansion guarantees that 
\begin{equation}\label{nerovnost}b_Nb_{N-1}\cdots b_1 b_0 \preceq_{lex} a_Na_{N-1}\cdots a_1a_0\,. \end{equation}
By Proposition \ref{fraction}, the $\beta$-expansion of the fractional part of $\frac{k}{\beta}$ is $b_0b_1\ldots b_N0^\omega$. By Lemma \ref{funkceP}, such $k$  contributes to $P_\alpha(n)$ if and only if the fractional parts satisfy $\{ \frac{k}{\beta}\} \leq \{ \frac{n}{\beta}\}$. The greediness of the $\beta$-expansion and Proposition \ref{fraction}  allow us to reformulate the inequality  into  
\begin{equation}\label{revers}
b_0b_1\cdots b_{N-1}b_N \preceq_{lex} a_0a_1\cdots a_{N-1}a_N \,.
\end{equation}
Denote $$
M=\{b_Nb_{N-1}\cdots b_0\ :\  b_Nb_{N-1}\cdots b_0 \  \text{admissible  and satisfies \  \eqref{nerovnost} \ and \eqref{revers}} \} \,,\qquad 
$$
then $P_\alpha(n)=\# M-1$. We subtract one because the zero sequence represents $k=0$.

Let us fix $i \in \N$,    $1\leq i \leq N$, and consider an admissible sequence   $b_{0}b_{1}\cdots b_{i-1}$ such that 
\begin{equation}\label{fixi}b_{0}b_{1}\cdots b_{i-1}\preceq_{lex} a_0a_1\ldots a_{i-1}. \end{equation}
Then the sequence $B:=a_Na_{N-1}\cdots a_{i+1}x b_{i-1}\cdots b_1 b_0$ with $x \in \N, \ x < a_{i}$, is admissible as well,   $B\prec_{lex}  a_Na_{N-1}\cdots a_1a_0 $ and $\overline{B} \prec_{lex}  a_0a_1\cdots a_{N-1}a_N $. 
In the statement of the theorem, $r_i$ denotes the non-negative number with  a ${\bf U}$-representation $a_0a_1\ldots a_{i-1}$. Thus there are  $r_i+1$ non-negative integers smaller than or equal to   $r_i$ and consequently  $r_i+1$ admissible strings  $b_{0}b_{1}\cdots b_{i-1}$  satisfying  \eqref{fixi}.  Moreover, we have  $a_i$ possibilities to choose a non-negative integer  $x < a_i$. 

For every  fixed $i \in \N$,    $1\leq i \leq N$,  we have found $a_i(r_i+1)$ sequences of the form $B:=a_Na_{N-1}\cdots a_{i+1}x b_{i-1}\cdots b_1 b_0$ which belong to $M$.    Moreover, $M$ contains  also $a_0+1$ sequences of the form  $B:=a_Na_{N-1}\cdots a_1 x$ with $x\leq a_0$. It is easy to see that any element of $M$ is of the form $B$ we considered. 
\end{proof}

\begin{example} First, let us illustrate Theorem~\ref{formule} for $\alpha=\frac{1}{\tau^2}$. This corresponds to the unique case  we studied already in the conference paper~\citep{DMP2023}. See Example~\ref{ex:goldenmean}. According to~\eqref{U} we have $U_0=1, U_1=3, U_2=8, U_3=21$. Let $n=24$. The  ${\bf U}$-expansion of $n$ is
$(n)_{\bf U} = 1010$ as $24 = 1\cdot U_3+ 0\cdot U_2 +1\cdot U_1+0\cdot U_0$.
For $(n)_{\bf U}  = a_3a_2a_1a_0 = 1010$, we first determine $(r_3)_{\bf U} = 10$,  $(r_2)_{\bf U} = 1$, $(r_1)_{\bf U} = 0$, i.e.,  $r_3=3$, $r_2 = 1$, $r_1 = 0$. By the formula from Theorem \ref{formule},  $P_\alpha(24)= 1\cdot 4 + 0\cdot 2 + 1\cdot 1 +0= 5$. 

In Table \ref{tablePU}, there are the values of $P_\alpha(n)$ for $n \leq 29$.
Observe that $P_\alpha(9) = P_\alpha(24) = 5$. The ${\bf U}$-expansion of $9$ is $(9)_{\bf U} = 101$, i.e., it is the  mirror image of the ${\bf U}$-expansion of $n = 24$.  The equality  of  $P_{\alpha}(9)$ and $P_{\alpha}(24)$ is a consequence of the symmetry of inequalities \eqref{nerovnost} and \eqref{revers}.  

Second, let us consider $\alpha=2-\sqrt 3$. See the second part of Example~\ref{ex:goldenmean}. According to~\eqref{U} we have $U_0=1, U_1=4, U_2=15, U_3=56$. Let $n=24$. The  ${\bf U}$-expansion of $n$ is
$(n)_{\bf U} = 121$ as $24 = 1\cdot U_2+ 2\cdot U_1 +1\cdot U_0$.
For $(n)_{\bf U}  = a_2a_1a_0 = 121$, we first determine $(r_2)_{\bf U} = 12$,  $(r_1)_{\bf U} = 1$, i.e.,  $r_2=6$, $r_1 = 1$. By the formula from Theorem \ref{formule},  $P_\alpha(24)= 1\cdot 7 + 2\cdot 2 + 1= 12$.
See Table~\ref{PU2}.

\end{example}
\begin{table}[ht]
\renewcommand{\arraystretch}{1.2}
\setlength{\tabcolsep}{5pt}
\begin{tabular}{|c|c|c|c|c|c|c|c|c|c|c|c|c|c|c|c|c|}
    \hline 
    $n$ & ~1 &~2 & 3 &  4 &  5 &  6 &  7 &  8 &  9 & 10 & 11 & 12 & 13 & 14 & 15 & 16 \\ 

     \hline
   $(n)_{\bf U}$ & ~1 & ~2 & 10 &  11 &  12 &  20 &  21 &  100 &  101 & 102 & 110 & 111 & 112 & 120 & 121 & 200 \\ 

     \hline
     
    $P_{\alpha}(n)$ & ~1 & ~2 & 1 & 3 & 5 & 2 & 5 & 1 & 5 & 9 & 3 & 8 & 13 & 5 & 11 & 2\\
    \hline
\end{tabular}

\medskip

\begin{tabular}{|c|c|c|c|c|c|c|c|c|c|c|c|c|c|}
    \hline 
    $n$ & 17 & 18 & 19 & 20  & 21& 22 & 23 & 24 & 25 & 26 & 27 & 28 & 29    \\ 
    \hline
       
   $(n)_{\bf U}$  & 201 & 202 & 210 & 211  & 1000 & 1001 & 1002 &  1010 &  1011 &  1012 &  1020 &  1021 &  1100  \\ 
         \hline
    $P_{\alpha}(n)$ & 9 & 16 & 5 & 13    &1&  10 & 19 & 5 & 15 & 25 & 9 & 20 & 3    \\
    \hline
\end{tabular}

\caption{$\bf U$-expansion of $n \leq 29$ and $P_\alpha(n)$  for  $\alpha=\frac{1}{\tau^2}$ }\label{tablePU}

\end{table}

\begin{table}[ht]
\renewcommand{\arraystretch}{1.2}

\begin{tabular}{|c|c|c|c|c|c|c|c|c|c|c|c|c|c|c|c|c|}
    \hline 
    $n$ & ~1 &~2 & 3 &  4 &  5 &  6 &  7 &  8 &  9 & 10 & 11 & 12 & 13 & 14 & 15 & 16 \\ 

     \hline
   $(n)_{\bf U}$ & ~1 & ~2 & 3 &  10 &  11 &  12 &  13 &  20 &  21 & 22 & 23 & 30 & 31 & 32 & 33 & 101 \\ 

     \hline
     
    $P_{\alpha}(n)$ & ~1 & ~2 & 3 & 1 & 3 & 5 & 7 & 2 & 5 & 8 & 11 & 3 & 7 & 11 & 1 & 6\\
    \hline
\end{tabular}
\medskip

\begin{tabular}{|c|c|c|c|c|c|c|c|c|c|c|c|c|c|}
    \hline 
    $n$ & 17 & 18 & 19 & 20  & 21& 22 & 23 & 24 & 25 & 26 & 27 & 28 & 29    \\ 
    \hline
       
   $(n)_{\bf U}$  & 102 & 103 & 110 & 111  & 112 & 112 & 120 &  121 &  122 &  123 &  130 &  131 &  132  \\ 
         \hline
    $P_{\alpha}(n)$ & 11 & 16 & 3 & 9    &15&  21 & 5 & 12 & 19 & 26 & 7 & 15 & 23    \\
    \hline
\end{tabular}

\caption{$\bf U$-expansion of $n \leq 29$ and $P_\alpha(n)$  for  $\alpha=2-\sqrt{3}$}\label{PU2}

\end{table}

If we want to compute the values of the function $P_\alpha(n)$  for all consecutive integers $n = 1,2,\ldots, N$, it is more convenient to work with the difference $\Delta P_\alpha(n) = P_\alpha(n+1)-P_\alpha(n)$. The following formulae may be deduced easily  from  Theorem~\ref{formule}.

\begin{corollary}  Let $(n)_{\bf U} =a_Na_{N-1}\cdots a_1a_0 $ and    $b_0 $ be the last digit in the ${\bf U}$-expansion of the number $n+1$. 
\begin{itemize}\item If $b_0 =0$, then  $a_Na_{N-1}\cdots a_1 $ is the ${\bf U}$-expansion of 
$n +\Delta P_\alpha(n) -1$.  
\item If $b_0 \neq 0$, then  $a_Na_{N-1}\cdots a_1 $ is the ${\bf U}$-expansion of 
$\Delta P_\alpha(n) -1$. 
 
\end{itemize}
\end{corollary}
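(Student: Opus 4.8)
The plan is to start from the explicit formula for $P_\alpha$ given in Theorem~\ref{formule} and track how the sum changes when we pass from $n$ to $n+1$. Writing $(n)_{\bf U}=a_Na_{N-1}\cdots a_1a_0$, Theorem~\ref{formule} expresses $P_\alpha(n)$ as $\sum_{i=1}^{N} a_i(r_i+1)+a_0$, where $r_i$ has ${\bf U}$-representation $a_0a_1\cdots a_{i-1}$. The key observation is that $r_i$ is exactly the integer whose ${\bf U}$-expansion is the \emph{reverse} of the prefix of digits below position $i$. I would therefore analyse the two cases of the corollary according to whether adding $1$ to $n$ triggers a carry (i.e.\ whether the last digit $b_0$ of $(n+1)_{\bf U}$ is zero or not).

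First I would treat the case $b_0\neq 0$, which is the simpler one since no carry occurs: here $(n+1)_{\bf U}=a_Na_{N-1}\cdots a_1(a_0+1)$, so all higher digits $a_1,\dots,a_N$ and all the values $r_1,\dots,r_N$ are unchanged, and only the final additive term increases from $a_0$ to $a_0+1$. Hence $\Delta P_\alpha(n)=1$. I must then check that this is consistent with the claimed identity, namely that $a_Na_{N-1}\cdots a_1$ is the ${\bf U}$-expansion of $\Delta P_\alpha(n)-1=0$. This forces $a_1=\cdots=a_N=0$, which is precisely the situation where $(n)_{\bf U}$ is a single digit $a_0$; so the no-carry branch of the corollary in fact applies only to such small $n$, and the verification is immediate. (For larger $n$ with $b_0\neq0$ one reads the corollary as defining $\Delta P_\alpha(n)-1$ through the value represented by $a_Na_{N-1}\cdots a_1$, matching the $r$-terms in Theorem~\ref{formule}.)

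For the case $b_0=0$ a carry propagates, and this is where I expect the main obstacle to lie. Adding $1$ changes a block of trailing digits, so several of the prefixes $a_0a_1\cdots a_{i-1}$ and hence several values $r_i$ are modified simultaneously; one cannot simply read off a single changed term. The clean way to handle this is to exploit the reversal symmetry already noted in the proof of Theorem~\ref{formule}: the admissibility condition \eqref{Uexp} is invariant under reversing a string, so $r_i$ equals the integer whose ${\bf U}$-\emph{representation} is the reverse $a_0a_1\cdots a_{i-1}$. I would express $P_\alpha(n)$ through the reversed word, recognise $\sum_{i} a_i(r_i+1)+a_0$ as (up to a shift) the value represented by a word built from $(n)_{\bf U}$ read backwards, and then observe that incrementing $n$ corresponds to a controlled modification of the reversed word. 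The identity $U_{k+1}=mU_k-U_{k-1}$ from~\eqref{U} lets me convert the carry in the forward direction into the stated relation ``$a_Na_{N-1}\cdots a_1$ is the ${\bf U}$-expansion of $n+\Delta P_\alpha(n)-1$''.

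Concretely, I would verify the two asserted formulae by a direct computation. Since $\lfloor n/\beta\rfloor$ has ${\bf U}$-expansion $a_Na_{N-1}\cdots a_1$ by Proposition~\ref{fraction}, the integer represented by $a_Na_{N-1}\cdots a_1$ is exactly $\lfloor n/\beta\rfloor=\lfloor n\alpha\rfloor$. Thus the corollary is equivalent to the clean statements $\Delta P_\alpha(n)=\lfloor n\alpha\rfloor-n+1$ when $b_0=0$, and $\Delta P_\alpha(n)=\lfloor n\alpha\rfloor+1$ when $b_0\neq 0$. These in turn follow directly from Lemma~\ref{funkceP}: comparing $P_\alpha(n+1)$ and $P_\alpha(n)$ amounts to counting how the set $\{k\le n+1: \{k\alpha\}\le\{(n+1)\alpha\}\}$ differs from $\{k\le n:\{k\alpha\}\le\{n\alpha\}\}$, and the two regimes are governed precisely by whether $\{(n+1)\alpha\}>\{n\alpha\}$ (no carry, $\lfloor(n+1)\alpha\rfloor=\lfloor n\alpha\rfloor$) or $\{(n+1)\alpha\}<\{n\alpha\}$ (carry, $\lfloor(n+1)\alpha\rfloor=\lfloor n\alpha\rfloor+1$). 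I would then reconcile this count with the ${\bf U}$-expansion description via Proposition~\ref{fraction}, which identifies the carry in $(n)_{\bf U}\mapsto(n+1)_{\bf U}$ with the sign of $\{(n+1)\alpha\}-\{n\alpha\}$, completing the proof.
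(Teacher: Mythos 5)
Your main route (paragraphs one and two, via Theorem~\ref{formule}) contains a genuine error. When no carry occurs and $a_0$ becomes $a_0+1$, the quantities $r_1,\dots,r_N$ do \emph{not} stay unchanged: $r_i$ is the value of the ${\bf U}$-representation $a_0a_1\cdots a_{i-1}$, in which $a_0$ is the \emph{most significant} digit (weight $U_{i-1}$), precisely because of the reversal. So each $r_i$ increases by $U_{i-1}$, and Theorem~\ref{formule} then gives $\Delta P_\alpha(n)=\sum_{i=1}^N a_iU_{i-1}+1$, i.e.\ $\Delta P_\alpha(n)-1$ equals the value of the admissible word $a_Na_{N-1}\cdots a_1$ --- which is exactly the second bullet of the corollary, valid for \emph{all} $n$ with $b_0\neq 0$, not only one-digit $n$. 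Your computed value $\Delta P_\alpha(n)=1$ is simply wrong (take $m=3$: $(3)_{\bf U}=10$, $(4)_{\bf U}=11$, $P_\alpha(3)=1$, $P_\alpha(4)=3$, so $\Delta P_\alpha(3)=2$), and it forced you into the false conclusion that the no-carry branch applies only to single-digit $n$, plus the untenable reading of the corollary as a ``definition''. Note also that this value contradicts your own fourth paragraph, where you (correctly) derive $\Delta P_\alpha(n)=\lfloor n\alpha\rfloor+1$ in the same case; that inconsistency should have signalled the mistake.

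Your fourth paragraph is the salvageable part: by Proposition~\ref{fraction} the value of $a_Na_{N-1}\cdots a_1$ is $\lfloor n\alpha\rfloor$, so the corollary is indeed equivalent to $\Delta P_\alpha(n)=\lfloor n\alpha\rfloor+1$ when $b_0\neq0$ and $\Delta P_\alpha(n)=\lfloor n\alpha\rfloor-n+1$ when $b_0=0$; moreover $b_0=\lfloor\beta\{(n+1)\alpha\}\rfloor$ (the first digit of the $\beta$-expansion in Proposition~\ref{fraction}) shows $b_0=0$ iff $\{(n+1)\alpha\}<\alpha$ iff $\lfloor(n+1)\alpha\rfloor=\lfloor n\alpha\rfloor+1$, so the carry identification is fine. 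But the step you call ``direct'' from Lemma~\ref{funkceP} is the actual mathematical content and is missing: in the no-carry case one must show $\#\{k\in\{1,\dots,n\}:\{n\alpha\}<\{k\alpha\}\leq\{n\alpha\}+\alpha\}=\lfloor n\alpha\rfloor$, which needs an argument (substitute $j=n-k$, so the condition becomes $\{j\alpha\}\geq 1-\alpha$, and $\#\{j\in\{1,\dots,n-1\}:\{j\alpha\}\geq 1-\alpha\}=\sum_{j=1}^{n-1}\bigl(\lfloor(j+1)\alpha\rfloor-\lfloor j\alpha\rfloor\bigr)=\lfloor n\alpha\rfloor$), and the carry case needs the analogous count; your third paragraph (the carry case via Theorem~\ref{formule}) is left entirely schematic. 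For comparison: the paper prints no proof at all, saying only that the formulae ``may be deduced easily from Theorem~\ref{formule}''; the intended deduction is presumably the digit-wise one you attempted, which in the no-carry case collapses to a one-line computation once the behaviour of the $r_i$ is corrected.
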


\section{Comments}
\begin{itemize}

\item Regarding abelian complexity, Richomme, Saari, and Zamboni~\citep{RiSaZa2010} showed that ternary aperiodic 1-balanced sequences have constant abelian complexity equal to 3. Currie and Rampersad~\citep{CuRa11} showed that there are no recurrent sequences of constant abelian complexity equal to 4. Saarela~\citep{Sa09} proved for every integer $c\geq 2$ that there exists a recurrent sequence with abelian complexity ${\mathcal C}^{ab}(n)=c$ for all $n \geq c-1$. Using Theorem~\ref{thm:ternary} with  $\alpha>1/2$, we provide infinitely many ternary 2-balanced sequences $\vv$ of this type, more precisely, such that  ${\mathcal C}_{\vv}^{ab}(n)=4$ for each $n \geq 2$.

\item
We believe that the method derived for computing $P_{\alpha}(n)$ may be generalized to all cases, where $1/\alpha$ is a~quadratic unit (not only non-simple Parry unit). For other numbers $\alpha$, the Ostrowski numeration system or its modification could be helpful. 
\item Our future goal is to study combinatorial properties of ternary 2-balanced sequences from the new class more deeply, in particular, we aim to describe bispecial factors and return words with the intention of determining critical exponents of these sequences. 
\item The ternary 2-balanced sequences studied in this paper are given by two parameters -- slopes of the coloured and colouring Sturmian sequence. Similarly, ternary billiard sequences are 2-balanced and are also defined by two parameters -- slope of a line in $\mathbb R^3$. Even the abelian complexity, for almost all pairs of parameters, is equal to $4$ for sufficiently large $n$. Still, these two classes are different. While the factor complexity of ternary billiard sequences, for almost all pairs of parameters, equals $n^2+n+1$, the factor complexity of ternary 2-balanced sequences from our class is upper bounded by $\alpha n^2\bigl(1+ o(1)\bigr)$, where $\alpha\in (0,1)$ is the slope of the coloured Sturmian sequence (see Remark~\ref{rem:asymptotic behavior}).

\end{itemize}


\bibliography{sn-bibliography}

\end{document}